\definecolor{myBlue}{rgb}{0.0,0.0,0.55}
\newtheorem{theorem}{Theorem}[section]
\newtheorem{corollary}[theorem]{Corollary}
\newtheorem{definition}[theorem]{Definition}
\newtheorem{remark}[theorem]{Remark}
\newcommand{\mbb}{\mathbb}
\DeclareMathOperator*{\var}{Var}
\DeclareMathOperator{\rank}{rank}
\newcommand{\vertiii}[1]{{\left\vert\kern-0.25ex\left\vert\kern-0.25ex\left\vert #1 
    \right\vert\kern-0.25ex\right\vert\kern-0.25ex\right\vert}}
\begin{document}
\title{A Preconditioner based on Non-uniform Row Sampling for Linear Least Squares Problems}
\author{Long Chen and Huiwen Wu}
\date{\today}
\begin{abstract}
Least squares method is one of the simplest and most popular techniques applied in data fitting, imaging processing and high dimension data analysis. The classic methods like QR and SVD decomposition for solving least squares problems has a large computational cost. Iterative methods such as CG and Kaczmarz can reduce the complexity if the matrix is well conditioned but failed for the ill conditioned cases. Preconditioner based on randomized row sampling algorithms have been developed but destroy the sparsity. 
In this paper, a new preconditioner is constructed by non-uniform row sampling with a probability proportional to the squared norm of rows. Then Gauss Seidel iterations are applied to the normal equation of the sampled matrix which aims to grab the high frequency component of solution. After all, PCG is used to solve the normal equation with this preconditioner. Our preconditioner can keep the sparsity and improve the poor conditioning for highly overdetermined matrix. 
Experimental studies are presented on several different simulations including dense Gaussian matrix, `semi Gaussian' matrix, sparse random matrix, `UDV' matrix, and random graph Laplacian matrix to show the effectiveness of the proposed least square solver.
\end{abstract}
\maketitle

%------------------------------------------------------------------------------------------------------
\section{Introduction}\label{sec1}

Least squares method is one of the simplest and most commonly applied techniques of data fitting. It can be applied in statistics to construct linear regression model and unbiased linear estimator~\cite{neter1996applied}, in imaging processing for image deblurring~\cite{IMID1990}, and in high-dimensional data analysis like canonical polyadic tensor decomposition~\cite{TDA2009} etc.

Consider the overdetermined system
\begin{equation}\label{AXB}
A x =b ,
\end{equation}
where $A \in \mathbb R^{m \times n}, b \in \mathbb R^{m}, m \gg n, $  and $\rank(A) = n$. As $m > n$, solutions to~\eqref{AXB} are in general not unique. 
The least squares solution to the overdetermined system~\eqref{AXB} is
\begin{equation}\label{OPT}
x_{\rm{opt}} = \arg \min_{x} \| Ax -b\|_2^2 ,
\end{equation}
where $\| \cdot \|_2$ is the $l^2$ norm of a vector. 
It is not difficult to show that the least squares solution $x_{\rm{opt}}$ satisfies the so-called normal equation
\begin{equation}\label{NOR}
A^{\intercal} A \, x = A^{\intercal} b.  
\end{equation}

We shall develop a fast iterative least squares solver for~\eqref{AXB}.  There are various methods for solving the least squares problems. Classical methods such as QR and SVD decomposition that require $O(mn^2)$ operations which is prohibitive for large size problems~\cite{NLA2012}. Kaczmarz methods~\cite{KAC1937} and randomized Kaczmarz methods~\cite{RKEC2009} are effective iterative methods for consistent least squares problems while the computational costs highly depends on a scaled condition number of matrix $A$. Several fast least squares solvers have been developed recently utilizing randomization see, for example~\cite{FLSA2007, SRFT2008, BLEN2010}. 
These fast least squares solvers try to construct a spectrally equivalent but of smaller size matrix via random transformation and random sampling or mixing. Due to the random transformation, however, these methods destroy the sparsity of $A$, and thus may not be suitable for sparse matrices. 

Conjugate Gradient (CG) method is an efficient iterative algorithm for solving linear systems of equations when the matrix is symmetric and positive-definite~\cite{NLA2012}. Thus CG method can be applied to solve the normal equation~\eqref{NOR}. CG is the archetype of Krylov subspace method that only needs the matrix-vector multiplication $Au$ and $A^{\intercal}v$. In order to achieve accuracy $\epsilon$, it needs $k \approx O( |\log(\epsilon)|  \kappa(A))$ iterations, where the condition number $
\kappa(A) = \sigma_{\max} (A)/\sigma_{\min} (A)$
with $\sigma_{\max} (A), \sigma_{\min} (A)$ being the maximal and minimal (non-zero) singular values of $A$ respectively. 
When $\kappa(A)$ is large, the method converges very slowly. Preconditioned conjugate gradient (PCG) with proper preconditioners can be applied to accelerate the convergence. The key of success of PCG is a good preconditioner.  

We shall use row sampling to construct an efficient preconditioner. Firstly we apply the random sampling to the rows of matrix $A$ and select $C n \log(n)$ rows, with probability proportional to the squared norm of each row, to get a sampled matrix $A_s$. Then the preconditioner $P$ is constructed by solving the sampled normal equation 
\begin{equation}\label{SNO}
 A_s^{\intercal} A_s e =r. 
\end{equation}
via a few symmetric Gauss-Seidel iterations. At last, we equip PCG method with this preconditioner to solve the original normal equation~\eqref{NOR}.

The main motivation of our method is to utilize the benefits of random sampling while keep the sparsity. By the random row sampling, we got a much smaller matrix $A_s$ compared to $A$ which reduces the complexity of matrix multiplication. The sampling algorithm and the chosen sampling density ensure the sampled matrix will capture the high frequency part of the original matrix $A$. The Gauss-Seidel iterations to the sampled normal system~\eqref{SNO} will smooth out the high frequency of the error and thus improves the condition number.

 The complexity of our method is $O(n^3 \log(n)) + O(kmn) $ for dense matrices. For sparse matrices, suppose the number of nonzero elements of matrix $A$ is $nnz$ and $nnz \ll m n$. The complexity is at most $O(n^2) + O(k nnz)$ and could be $O(n) + O(k nnz)$ depending on the sparse pattern, where $k = O(|\log (\epsilon)| \kappa(PA))$ is the iteration steps of PCG depending on the condition number of the preconditioned system $PA$. Although we cannot obtain an uniform bound of $\kappa(PA)$, we show by the numerical experiments that PCG with the new preconditioner improves the convergence significantly comparing with CG with a simple diagonal preconditioner. 

The paper is organized as follows. Section 2 contains a summary of existing methods for least squares problem including classical methods like QR and SVD decompositions, Kaczmarz methods of cyclic and randomized version, and several fast least squares solvers. The details of our algorithms including how to construct the row sampling preconditioner and the complexity of algorithms are discussed in Section 3. Then numerical experiments are presented in Section 4, showing our proposed solver is efficient and effective. 

%-----------------------------------------------------------------------------------------------------------------
\section{Existing Methods}

In this section, we review several existing methods for least squares problem. These methods are: the most basic ones like QR and SVD decomposition, the iterative methods such as Conjugate Gradient Descent(CG), Kaczmarz methods and also the newly developed randomized methods like randomized Kaczmarz method, and fast least squares solvers with random transformation. Convergence rate and computational complexity are compared while complexity is mainly calculated for dense matrices. Each method has its advantages and disadvantages -- none is perfect. Due to the eruption of informations, the tendency is to find more efficient methods with less computation complexity and storage.

\subsection{Classical Methods}
Classical methods to solve the least squares problem include direct methods such as QR decomposition, SVD decomposition, and iterative methods such as CG methods and Kaczmarz methods. The complexity for QR decomposition and SVD decomposition is $O(mn^2)$ which is huge when $m$ or $n$ is large. This is the limitation of direct methods like QR and SVD decomposition. 

CG method is an iterative method to solve symmetric and positive definite (SPD) systems. It is applicable to sparse systems that are too large to be solved by a direct solver. In general for Krylov subspace methods, only matrix-vector product instead of matrix-matrix product is required and thus save the storage and reduce the complexity provided the method convergences fast. In the overdetermined case, if CG is applied to the normal equation~\eqref{NOR}, only the matrix-vector multiplication $Au$ and $A^{\intercal}v$ is needed which cost $O(nnz(A))$ operations. Here $nnz(\cdot)$ is the number of nonzero entries of a matrix. In order to achieve the accuracy $\epsilon$, CG needs $O(|\log(\epsilon)|\sqrt{\kappa(A^{\intercal}A)}) = O(|\log(\epsilon)|\kappa(A))$ steps, where the condition number of matrix $A$ is defined by
\begin{equation}\label{COND}
\kappa(A) = \frac{\sigma_{\max} (A)}{\sigma_{\min} (A)}
\end{equation} 
with $\sigma_{\max} (A), \sigma_{\min} (A)$ being the maximal and minimal singular values of $A$ respectively. Therefore the complexity for CG applied to ~\eqref{NOR} is $O(m n \kappa(A) |\log(\epsilon)|)$ for dense matrices and $O(nnz(A) \kappa(A) |\log(\epsilon)|)$ for sparse matrices. Tailored implementations of CG to normal equation~\eqref{NOR} include CGLS~\cite{bjorck1996numerical} and LSQR~\cite{LSQR1982}.

For consistent systems, Kaczmarz method can be applied. 
A linear system is called consistent if there is at least one solution, i.e. $b \in \rm{range}(A)$ in~\eqref{AXB}. Kaczmarz method is to project approximation onto the hyperplane $a_i x = b_{i}$ where $a_i$ is the $i$-th row of matrix $A$. 
Oswald and Zhou~\cite{CON2015} obtained the convergence rate of cyclic Kaczmarz method 
 $$
 \| x_{\rm{opt}} - x_s \|_2^2 \leq \left[1 - \frac{1}{(\log(n)+1)\kappa(A^{\intercal} D^{-1} A)}\right]^s \| x_{\rm{opt}} - x_0\|_2^2,
 $$
 where $x_{\rm{opt}}$ is the least squares solution, $x_s$ is the $s$th iterate consisting of sweeping of all rows, $x_0$ is the initial guess and $D$ is an $m \times m$ diagonal matrix which induces a row scaling. 
One sweeping takes $O( mn)$ operations. In order to achieve accuracy $\epsilon$, the total complexity of Kaczmarz method is $  O (m n \log(n) \kappa(A^{\intercal}D^{-1}A) | \log(\epsilon)| ). $

The advantage of iterative methods is that they utilize the sparsity since in each iteration only matrix-vector multiplications are calculated. For example,  the complexity of CG is $O(nnz(A) \kappa(A) |\log(\epsilon)|)$ when $A$ is sparse. For Kaczmarz method, the complexity also reduces since the cost of each projection is less than $O(n)$ depending on the sparsity of $A$. As the iteration steps of both CG and Kaczmarz for consistent systems depend crucially on the condition number $\kappa(A)$, they are slow if $A$ is ill-conditioned, i.e. $\kappa(A)\gg 1$. Preconditioner can be used to improve the condition number and in turn accelerate the convergence. One way to construct effective preconditioners is to use random sampling and random transformation, which will be discussed below.

\subsection{Randomized Methods}
There are several approaches to accelerate traditional least squares solvers via randomization. The main idea is either use random sampling or random projection to reduce the size of the original matrix, e.g. the randomized Kaczmarz method~\cite{RKEC2009} and randomized fast solvers in~\cite{BLEN2010,FLSA2011}, or construct preconditioners by random sampling to reduce the condition number which enable to apply PCG~\cite{SRFT2008} or LSQR~\cite{BLEN2010} of the preconditioned system.

\subsubsection{Randomized Kaczmarz Methods}
The convergence rate of the cyclic Kaczmarz method highly depends on the ordering of rows of $A$. In order to achieve a faster convergence which is independent of ordering of rows, choosing rows at random is a good strategy. For consistent systems, i.e. $b \in \text{Range}(A)$, Strohmer and Vershynin~\cite{RKEC2009}  proposed a randomized Kaczmarz (RK) method which selects the hyperplane to do projection via the probability proportional to $\| a_i \|_2^2$, and proved its exponential convergence in expectation, i.e.
$$\mathbb E (\| x_k - x_{\rm{opt}} \|_2^2) \leq (1 - \kappa_F(A)^{-2})^k \| x_0 - x_{\rm{opt}}\|_2^2,$$
where $x_k$ is the $k$-th iteration consisting of one projection only, $x_0$ is the initial guess, $x_{\rm{opt}}$ is the least squares solution and $\kappa_F(A) = \| A\|_F^2 \| (A^{\intercal} A)^{-1} \|_2^2  $ is a scaled condition number. Notice that here one iteration requires only $O(n)$ operations. To achieve the accuracy $\epsilon$, the expected iteration steps $O( \kappa^2_F(A) | \log(\epsilon)| )$ and the total expected complexity is $O(n  \kappa^2_F(A) | \log(\epsilon)|)$. 

For consistent systems, the randomized Kaczmarz method converges with expected exponential rate independent on the number of equations in the system. Indeed, the solver does not need to know the whole system but only a $O(n\log n)$ rows as the system is assumed to be consistent~\cite{RKEC2009}. Thus it outperforms some traditional methods like CG on general extremely overdetermined system. The main limitation of RK is its inability of handling inconsistent systems. For instance, to solve $Ax = b$, where $b = y+w,$ with $y = b_{R(A)}$ is the projection of $b$ onto range of $A$ and $w = b_{R(A)^{\perp}}$, the randomized Kaczmarz method is effective when least squares estimation is effective, i.e. the least squares error $\| w \|_2$ is negligible~\cite{REK2013}. Extension of randomized Kaczmarz methods to inconsistent systems can be found in~\cite{RKN2010,REK2013,RKI2015}. 

\subsubsection{Fast Least Squares Solvers by Random Transformations}
Drineas, Mahoney, Muthukrishnan and  Sarlos~\cite{FLSA2007} developed two randomized algorithms for least square problems. Instead of solving the original least squares problem $\| A x - b \|_2^2$, they solve an approximate least squares problem $\| X A x - X b \|_2^2$, where $X = SHD$ for randomized sampling  or $X = THD$ for randomized projection. The operator $HD$ is the randomized Hadamard transformation which aims to spread out the elements of $A$ and $S$ is the uniform sampling matrix and $T$ is the randomized projection matrix aiming to reduce the size of the original problem. The complexity of Hadamard transformation is $O(m \log(m))$ and the complexity of traditional methods to the approximated least squares problem is $O(r n^2)$ with the sample size $r = O(n /\epsilon)$ is chosen so that $XA$ is full rank but $r \ll m$. The complexity reduce to $O(mn\log(n/\epsilon) + n^3/\epsilon) \ll O(mn^2)$ provided $\epsilon$ is not too small ~\cite{FLSA2011}. 

Rokhlin and Tygert~\cite{SRFT2008} proposed a fast randomized algorithm for overdetermined linear least squares regression. They constructed subsampled randomized Fourier transform (SRFT) matrix $T$ of size $r \times m$ and then apply pivoted QR decomposition to $TA = QR\Pi$, with a $r \times n$ orthonormal matrix $Q$, an upper-triangular $n \times n$ matrix $R$ and an $n \times n$ permutation matrix $\Pi$. Then apply PCG with the right preconditioning matrix $P = R \Pi$, i.e. to minimize the preconditioned system $\| A P^{-1} y - b\|$.  
           
According to  theory developed in~\cite{SRFT2008}, $\kappa(AP^{-1}) = \kappa(TU)$ where the columns of $U$ are left singular vectors of $A$. The condition number of $TU$ can be controlled by the number of rows of $T$, i.e. $r$. In practice, $\kappa(TU) \leq 3$ when $r = 4n$. In this method, it converges fast since $\kappa(A P^{-1})$ is much smaller than $\kappa(A)$ but needs one QR decomposition which is $O(n^2 r)$. The total theoretical complexity is $O((\log(r) + \kappa(AP^{-1})|\log( \epsilon )| mn ) + O(n^2r)$. 

Avron, Maymounkov and Toledo~\cite{BLEN2010} developed an algorithm called BLENDENPIK, which supercharges LAPACK's dense least-squares solver. They introduce the concept of coherence number $\mu(A)$, which is the maximum of squared norm of rows of $Q$, where the columns of $Q$ are a set of orthonormal bases of range of $A$. They find that the uniform sampling will work if the coherence number of the matrix is small and apply  the row mixing to reduce $\mu(A)$ if it is large~\cite{BLEN2010}. The crucial observation is that a unitary transformation preserves the condition number but changes the coherence number $\mu(A)$. After prepossessing with row mixing, the coherence number $\mu(A)$ is reduced and uniform sampling can be applied to get a sampled matrix $A_s$ with only $O(n \log(n))$ rows. Then they decomposed the sampled matrix $A_s = QR$ and used LSQR method to solve the original system $Ax = b$ with preconditioner $R^{-1}$~\cite{BLEN2010}. The complexity of row mixing is $O(mn \log(m)) $ and QR decomposition of sampled matrix is $O(n^3)$. LSQR applied to the linear system $Ax = b$ with preconditioner $R^{-1}$ costs $O(mn \kappa(AR^{-1}) |\log(\epsilon)|)$. To conclude, the total complexity of BLENDENPIK method is $O(mn \log(m) + n^3 + 
mn \kappa(A R^{-1}) |\log(\epsilon)|). $

The common feature of these fast least squares solvers is that they all try to use random sampling or random transformation to get a spectrally equivalent matrices but with considerably small size. Then an efficient preconditioner can be constructed via these small size matrices. However, the preprocesses of these methods transform sparse matrices into dense matrices, which cannot take the advantage of sparsity if the original matrix $A$ is.

To conclude, the traditional methods like QR and SVD decomposition need $O(mn^2)$ which is prohibitive when $m,n$ is large. Iterative methods such as CG and Kaczmarz can reduce the complexity if the matrices are well conditioned but failed for the ill conditioned cases. Preconditioner based on randomized row sampling algorithms have been developed but destroy the sparsity. 
Our goal is to construct a preconditioner which can keep the sparsity and improve the poor conditioning for highly overdetermined matrix.

%--------------------------------------------------------------------
\section{Row Sampling}
%------------------------------------------------
\subsection{Preliminaries and Notation}
Before walking into the details of our algorithms, we introduce some notation and concepts we may confront. Suppose $A$ is a matrix of size $m \times n$ with $m \geq n$. Denote $a_1, a_2, \cdots, a_m$ to be the row vectors of $A$ and $a^1, a^2, \cdots, a^n$ the column vectors of $A$. For any vector $v$, $\|v \| = (\sum_i v_i^2)^{\frac{1}{2}}$ is the $l_2$ norm of $v$ and is called norm of $v$ for short. For any matrix $A$, the spectral norm $\| A \| = \max_{x \neq 0} \| Ax \|/\|x\|$ is the induced matrix norm by vector $l_2$ norm and the Frobenius norm $\|A \|_F = (\sum_{i,j} a_{ij}^2)^{\frac{1}{2}}$. 
\begin{definition}[Condition Number]
The condition number $\kappa(A)$ of matrix $A$ is defined as 
$$
\kappa(A) = \frac{\sigma_{\max} (A)}{\sigma_{\min} (A)}
$$
with $\sigma_{\max} (A), \sigma_{\min} (A)$ are the maximal and minimal singular values of $A$ respectively.
\end{definition}
A matrix is said to be singular if the condition number is infinite. In our setting, the matrix $A$ is of full rank. Thus the smallest singular value of $A$ is nonzero and the condition number $\kappa(A) < \infty.$

 Another concept need to mention is the coherence number introduced in~\cite{BLEN2010}. 

\begin{definition}[Coherence Number~\cite{BLEN2010}] Let $A$ be an $m \times n$ full rank matrix, and let $U$ be an $m \times n$ matrix whose columns form an orthonormal basis of the column space of $A$.  The coherence of $A$ is defined as 
$$
\mu(A) = \max_{1 \leq i \leq m} \| u_i \|_2^2,
$$
where $u_i$ is the $i$th row of matrix $U$. 
\end{definition}

Obviously, the coherence number of a matrix is always between $n/m$ and $1$. Matrices with small coherence numbers are called incoherent~\cite{BLEN2010}, for example, the Gaussian matrix $X$ which every element is an independent number generated by standard normal distribution. One example of semicoherent matrices is the one with large coherent number but only half rows have a large norm in the orthogonal factor, for example, 
\begin{equation}\label{SEM}
Y_{m \times n} = \begin{bmatrix} X_{(m - n/2) \times n/2} & 0 \\ 0 & I_{n/2} \end{bmatrix}
\end{equation}
where $I_{k}$ is a square identity matrix of size $k\times k$. 
Coherent matrices are of large coherent number, for instance, 
\begin{equation}\label{COH}
Z_{m \times n} = \begin{bmatrix} I_{n} \\ 0 \end{bmatrix}.
\end{equation}

\subsection{Row Sampling}
We present a row sampling algorithm. Given a matrix $A_{m \times n}$ and a probability mass function $\{ p_k, k =1,2, \cdots, m \} $, which will be called sampling density, randomly choose $s$ rows of $A$ via the given sampling density; see Algorithm~\ref{alg1}. 

\begin{algorithm}
  \caption{The row sampling algorithm introduced in~\cite{FLSA2011}.} 
  \label{alg1}  
\textbf{Input:} $A \in \mathbb R^{m \times n}, b \in \mathbb R^m, $ a probability mass function $\{ p_k, k =1,2, \cdots, m \} $ and a sample size $s$. \\
\textbf{Output:} Sampled matrix $A_s \in \mathbb R^{s \times n} $ 
\begin{algorithmic} 
\FOR {$t = 1:s $} 
\STATE  Pick $i_t \in \{ 1,2,\cdots,m\}$ with probability ${\rm Pr}\{ i_t = k\} = p_k $ in identical and independent distributed (i.i.d.) trials.  
\ENDFOR
\end{algorithmic}
Let $S \in \mathbb R^{s \times m}$ with $S_{t,i_t} = 1/(s p_{i_t})^{1/2}$, then $A_s = SA$ is a sampling of $A$.
\medskip
\medskip
\end{algorithm}

Among various sampling densities, we chose the one proportional to the squared norm of each row:
\begin{equation}\label{R2P}
p_k = \frac{\| a_k \|_2^2}{\| A \|_F^2}, k = 1,2,\cdots, m. 
\end{equation} 
The naive uniform sampling $p_k = 1/m, k = 1,2 \cdots, m$ fails when the coherence number of the matrix is large. For example, for the coherent matrix $Z$ defined in~\eqref{COH}, we have to sample all $s$ rows from the first $n$ rows, whose probability $s! / m^s$ is very tiny, otherwise we will get a rank deficient matrix.

\subsection{Approximation property of the non-uniform sampling}
If we write the normal matrix as the summation of rank $1$ matrices 
$$
A^{\intercal} A = \sum_{i = 1}^m a_i^{\intercal} a_i,
$$
then the approximation obtained by the random sampling is given by 
$$
A_s^{\intercal} A_s = \frac{1}{s}\sum_{t=1}^{s} \frac{1}{p_{i_t}} a_{i_t}^{\intercal} a_{i_t} . 
$$
It is straightforward to verify that $A_s^{\intercal} A_s$ is an unbiased estimator for $A^{\intercal} A$, i.e. 
\begin{equation}\label{EATA}
\mathbb E [A_s^{\intercal} A_s] = A^{\intercal} A,
\end{equation}
for any choice of sampling density. Choice~\eqref{R2P} will minimize the variance in Frobenius norms; see Lemma 1 of Chapter 2 in~\cite{RNLA}. 

More importantly such row sampling density keeps the spectral norm in a small variance with high probability. To show this, we need the following concentration result. 
 
\begin{theorem}[Matrix Bernstein (Theorem 6.1 in \cite{TBRM2012})] Let $\{ X_k \}$ be a sequence of independent random, self- adjoint matrices with dimension $d$. Assume 
$$
\mathbb E [X_k] = 0 \quad \text{and} \quad \lambda_{\max}(X_k) \leq R \quad \text{almost surely.}
$$
Let
$$
\sigma^2 = \| \sum_k {\rm Var}(X_k) \| = \| \sum_k \mathbb E(X_k^2) \|. 
$$
Then for all $t \geq 0$
\begin{equation}\label{MatBer}
{\rm Pr} \left( \lambda_{max}(\sum_k X_k) \geq t \right) \leq d \exp \left( - \frac{t^2/2}{\sigma^2 + Rt/3} \right). 
\end{equation}
\end{theorem}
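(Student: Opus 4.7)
The plan is to follow the matrix Laplace transform method of Ahlswede--Winter as refined by Tropp. Let $Y = \sum_k X_k$. The starting point is a matrix analogue of the Chernoff bound: for any $\theta > 0$, since $x \mapsto e^{\theta x}$ is increasing and $\lambda_{\max}(Y) \geq t$ forces $\tr e^{\theta Y} \geq e^{\theta t}$, Markov's inequality yields
\begin{equation*}
\Pr\bigl(\lambda_{\max}(Y) \geq t\bigr) \;\leq\; e^{-\theta t}\,\mathbb{E}\bigl[\tr\, e^{\theta Y}\bigr].
\end{equation*}
So everything reduces to controlling the matrix moment generating function $\mathbb{E}[\tr\, e^{\theta Y}]$, after which we optimize over $\theta$.

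The central difficulty, and the step I expect to be the hardest, is that the $X_k$ do not commute, so one cannot simply factor $e^{\theta Y}$ into a product of independent factors. The classical workaround is Lieb's concavity theorem: for any fixed self-adjoint $H$, the map $A \mapsto \tr \exp(H + \log A)$ is concave on the cone of positive definite matrices. Applied inductively, together with Jensen's inequality, this gives the subadditivity of matrix cumulant generating functions,
\begin{equation*}
\mathbb{E}\bigl[\tr\exp\bigl(\textstyle\sum_k \theta X_k\bigr)\bigr] \;\leq\; \tr \exp\Bigl(\textstyle\sum_k \log \mathbb{E}[e^{\theta X_k}]\Bigr).
\end{equation*}
I would prove this by conditioning on $X_1,\dots,X_{k-1}$, writing $e^{\theta Y}$ as $\exp(H + \theta X_k)$, and pushing the expectation through $\log$ using Lieb.

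Next, I need a scalar-type bound on each factor $\log \mathbb{E}[e^{\theta X_k}]$ in the positive semidefinite order. Since $\lambda_{\max}(X_k) \leq R$ and $\mathbb{E}[X_k]=0$, expand $e^{\theta x} = 1 + \theta x + \sum_{j \geq 2} (\theta x)^j/j!$; bounding $x^{j-2} \leq R^{j-2}$ on the spectrum of $X_k$ gives a matrix inequality
\begin{equation*}
\mathbb{E}[e^{\theta X_k}] \;\preceq\; I + g(\theta)\,\mathbb{E}[X_k^2], \qquad g(\theta) = \frac{\theta^2/2}{1-\theta R/3},
\end{equation*}
valid for $0 < \theta < 3/R$. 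Applying the operator monotone bound $\log(I+M) \preceq M$ then yields $\log\mathbb{E}[e^{\theta X_k}] \preceq g(\theta)\,\mathbb{E}[X_k^2]$.

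Finally, assemble the pieces. The three bounds combine to give
\begin{equation*}
\Pr\bigl(\lambda_{\max}(Y)\geq t\bigr) \;\leq\; e^{-\theta t}\,\tr\exp\!\Bigl(g(\theta)\textstyle\sum_k \mathbb{E}[X_k^2]\Bigr) \;\leq\; d\,\exp\bigl(-\theta t + g(\theta)\sigma^2\bigr),
\end{equation*}
using that $\tr\exp(M) \leq d\,\lambda_{\max}(e^M) = d\,e^{\lambda_{\max}(M)}$ and the definition of $\sigma^2$. Choosing $\theta = t/(\sigma^2 + Rt/3)$, which lies in the admissible range $(0,3/R)$, minimizes the exponent and produces exactly the stated bound $d\exp\!\bigl(-\tfrac{t^2/2}{\sigma^2 + Rt/3}\bigr)$. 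The only genuinely non-elementary ingredient is Lieb's theorem; the rest is bookkeeping and scalar calculus.
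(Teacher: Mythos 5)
The paper does not prove this theorem: it is imported verbatim from Tropp~\cite{TBRM2012} (Theorem 6.1 there) and used as a black box, so there is no in-paper argument to compare against. Your proposal is an accurate outline of Tropp's own proof --- the matrix Laplace transform bound $\Pr(\lambda_{\max}(Y)\geq t)\leq e^{-\theta t}\mathbb{E}[\tr e^{\theta Y}]$, subadditivity of the matrix cumulant generating function via Lieb's concavity theorem, the semidefinite bound $\log \mathbb{E}[e^{\theta X_k}] \preceq g(\theta)\mathbb{E}[X_k^2]$, and the choice $\theta = t/(\sigma^2 + Rt/3)$ --- and the final bookkeeping checks out. One step is stated too loosely to survive being written out literally: the hypothesis bounds only $\lambda_{\max}(X_k)\leq R$, so the spectrum of $X_k$ may extend arbitrarily far below zero, and the term-by-term estimate $x^{j-2}\leq R^{j-2}$ fails for $x<-R$ when $j$ is even. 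The standard repair is to bound the whole remainder at once: the function $f(x) = (e^{\theta x}-1-\theta x)/x^2$ (with $f(0)=\theta^2/2$) is increasing on all of $\mathbb{R}$, so $f(x)\leq f(R)\leq \frac{\theta^2/2}{1-\theta R/3}$ for every $x\leq R$ and $0<\theta<3/R$, which yields $\mathbb{E}[e^{\theta X_k}]\preceq I + g(\theta)\mathbb{E}[X_k^2]$ with no lower bound on the eigenvalues. With that adjustment your sketch is precisely the original proof.
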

From the matrix Bernstein inequality, we can derive the following result which is  slightly different with results in~\cite{Oliveira:2010random}. 
\begin{corollary}[Sum of Rank-1 Matrices]\label{SR1}
Let $y_1,y_2,\cdots,y_s$ be i.i.d. random column vectors in $\mathbb R^n$ with 
$$
\| y_k \| \leq M \quad \text{and} \quad \|\mathbb E[y_k y_k^{\intercal}] \| \leq \alpha^2,
$$
for $k=1,2,\ldots, s$. 
Then for any $\epsilon \in [0,1]$
$$
{\rm Pr} \left( \| \frac{1}{s} \sum_{k=1}^s y_k y_k^{\intercal} - \mathbb E [y_1 y_1^{\intercal}] \| \geq \epsilon \right) \leq 
2n \exp \left(-\frac{3 s \epsilon^2}{(6 \alpha^2 + 2 \epsilon) (M^2+\alpha^2)}\right). 
$$
\end{corollary}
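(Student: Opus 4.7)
The plan is to apply the Matrix Bernstein inequality directly to the centered, normalized summands
\[
X_k \deq \frac{1}{s}\bigl(y_k y_k^{\intercal} - \mathbb{E}[y_k y_k^{\intercal}]\bigr), \qquad k = 1,\ldots,s.
\]
Each $X_k$ is a self-adjoint $n\times n$ random matrix with $\mathbb{E}[X_k]=0$, and the $X_k$ are independent since the $y_k$ are i.i.d. Moreover $\sum_k X_k = \frac{1}{s}\sum_k y_k y_k^{\intercal} - \mathbb{E}[y_1 y_1^{\intercal}]$, which is exactly the quantity whose spectral norm we must control.

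First I would bound the almost-sure parameter $R$. Since $y_k y_k^{\intercal}$ is a rank-one positive semidefinite matrix with spectral norm $\|y_k\|^2 \leq M^2$, and $\|\mathbb{E}[y_k y_k^{\intercal}]\| \leq \alpha^2$ by assumption, the triangle inequality gives $\|y_k y_k^{\intercal} - \mathbb{E}[y_k y_k^{\intercal}]\| \leq M^2 + \alpha^2$ almost surely, so we may take $R = (M^2+\alpha^2)/s$. Next I would bound the variance parameter $\sigma^2 = \|\sum_k \mathbb{E}[X_k^2]\|$. Using the centered-square identity together with the rank-one collapse $y_k y_k^{\intercal} y_k y_k^{\intercal} = \|y_k\|^2 y_k y_k^{\intercal}$, I obtain
\[
s^2\,\mathbb{E}[X_k^2] \;=\; \mathbb{E}\bigl[\|y_k\|^2\, y_k y_k^{\intercal}\bigr] \;-\; \bigl(\mathbb{E}[y_k y_k^{\intercal}]\bigr)^{2}.
\]
Pulling the scalar bound $\|y_k\|^2 \leq M^2$ out of the first term yields $\|\mathbb{E}[\|y_k\|^2 y_k y_k^{\intercal}]\| \leq M^2\alpha^2$, while $\|(\mathbb{E}[y_k y_k^{\intercal}])^{2}\| \leq \alpha^4$. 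By the triangle inequality $\|\mathbb{E}[X_k^2]\| \leq \alpha^2(M^2+\alpha^2)/s^2$, and summing the $s$ i.i.d. copies gives $\sigma^2 \leq \alpha^2(M^2+\alpha^2)/s$.

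Finally I would invoke Matrix Bernstein with $t=\epsilon$. Since the statement concerns the spectral (two-sided) norm rather than the largest eigenvalue, I apply \eqref{MatBer} to both $\sum_k X_k$ and $\sum_k(-X_k)$ and take a union bound, which produces the factor $2n$ in front. Plugging the values of $R$ and $\sigma^2$ into the exponent and factoring $(M^2+\alpha^2)$ out of the denominator yields
\[
\frac{\epsilon^2/2}{\sigma^2 + R\epsilon/3}
\;\geq\; \frac{\epsilon^2/2}{(M^2+\alpha^2)(\alpha^2 + \epsilon/3)}
\;=\; \frac{3 s \epsilon^2}{(6\alpha^2 + 2\epsilon)(M^2+\alpha^2)},
\]
which is exactly the exponent in the stated bound.

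The only subtle point, and the step I expect to be the main obstacle, is the variance estimate: one must choose the slightly loose bound $\sigma^2 \leq \alpha^2(M^2+\alpha^2)/s$ rather than the sharper $\sigma^2 \leq \alpha^2 M^2 / s$ so that the common factor $(M^2+\alpha^2)$ cleanly combines with the $Rt/3$ contribution and gives the clean product form $(6\alpha^2+2\epsilon)(M^2+\alpha^2)$ in the denominator. The hypothesis $\epsilon \in [0,1]$ plays no role in the derivation itself; it merely marks the regime in which the resulting deviation bound is informative.
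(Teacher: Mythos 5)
Your proof is correct and follows essentially the same route as the paper's: the same centered summands $X_k = (y_ky_k^{\intercal}-\mathbb E[y_ky_k^{\intercal}])/s$, the same bounds $R\le (M^2+\alpha^2)/s$ and $\sigma^2\le \alpha^2(M^2+\alpha^2)/s$ via the rank-one collapse $Y_k^2=\|y_k\|^2Y_k$, and the same plug-in to Matrix Bernstein (you are in fact more explicit than the paper about the two-sided union bound that produces the factor $2n$). The only blemish is a dropped factor of $s$ in the numerator of the middle expression of your final display; the end result is stated correctly.
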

\begin{proof}
 Let $Y_k =  y_ky_k^{\intercal}, A =  \mathbb E\left [ Y_k\right ]$, and $X_k = (Y_k - A)/s$ for $k = 1,2, \ldots, s$. Then $\mbb E [X_k]=0$. We bound the spectral norm of $X_k$ as
$$
\lambda_{\max}(X_k) = \|X_k\| \leq \frac{1}{s} \left ( \|Y_k\| + \|A\| \right ) \leq \frac{ M^2 + \alpha^2}{s},
$$ 
where we use the fact $\| Y_k\| = \|y_ky_k^{\intercal}\| = \|y_k\|^2 \leq M^2$. We then compute the variance 
\begin{align*}
\|\mbb E[X_k^2]\| = \|\var(X_k)\| = \frac{1}{s^2} \|\var (Y_k)\| = \frac{1}{s^2} \| \mbb E[Y_k^2] - A^2\|
%\leq\frac{1}{s^2}(  \|  \mbb E\left [\|y_k\|^2Y_k\right ] \|+ \| A^2 \| ) 
%\leq \frac{1}{s^2} (\| \| y_k \| \mbb E [\left[Y_k] \right] + \| A\|^2) 
  \leq      \frac{\alpha^2( M^2 + \alpha^2)}{s^2}.
\end{align*}
Here we compute $Y_k^2 = y_ky_k^{\intercal}y_ky_k^{\intercal} = \|y_k\|^2Y_k$. Sum over $k$ to get $\sigma^2 \leq \alpha^2(M^2 + \alpha^2)/s.$

Plug the bound $R \leq (M^2 + \alpha^2)/s, \sigma^2 \leq \alpha^2(M^2 + \alpha^2)/s$ into inequality 
(\ref{MatBer}) and rearrange the terms, we get the desired result. 

\end{proof}

We shall apply this concentration result to our row sampling scheme. 
\begin{theorem}\label{th:approx}
Suppose $A$ is a matrix of size $m\times n$ with $m > n$ and $\|A\|_F^2 = n$. Given $\epsilon \in (0,1), \delta \in (0,1)$, let $C = \frac{2}{3} (6 \| A \|^2 + 2 \epsilon)(1 - \log_n (\delta/2))$ and $s = C\epsilon^{-2}n\log n$. Let $A_s$ be a sampled matrix obtained by Algorithm 1 with sampling density~\eqref{R2P} and sample size $s$. Then 
\begin{equation}\label{eq:spectralnorm}
\| A_s^{\intercal} A_s - A^{\intercal} A \| \leq \epsilon  \quad \text{with probability at least $1 - \delta$}.
\end{equation}
\end{theorem}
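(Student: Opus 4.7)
The plan is to apply the Sum of Rank-1 Matrices concentration result in Corollary \ref{SR1} to a suitable sequence of i.i.d.\ random rank-one matrices that reconstruct $A_s^{\intercal}A_s$. Concretely, for each sample $t = 1,\ldots,s$, introduce the random column vector $y_t \in \mathbb{R}^n$ defined by $y_t = a_{i_t}^{\intercal}/\sqrt{p_{i_t}}$, where $i_t$ is drawn independently from the sampling density~\eqref{R2P}. By the construction of $S$ in Algorithm~\ref{alg1}, one immediately gets $A_s^{\intercal} A_s = \frac{1}{s}\sum_{t=1}^s y_t y_t^{\intercal}$, and a direct computation shows $\mathbb{E}[y_t y_t^{\intercal}] = \sum_{k=1}^m p_k \cdot a_k^{\intercal} a_k/p_k = A^{\intercal} A$, recovering the unbiasedness~\eqref{EATA}.

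Next I would verify the two quantitative hypotheses of Corollary~\ref{SR1}. The norm bound is the crucial place where the sampling density~\eqref{R2P} pays off: plugging in $p_k = \|a_k\|^2/\|A\|_F^2$ gives $\|y_t\|^2 = \|a_{i_t}\|^2/p_{i_t} = \|A\|_F^2 = n$ deterministically, so we may take $M^2 = n$. For the second quantity, $\|\mathbb{E}[y_t y_t^{\intercal}]\| = \|A^{\intercal} A\| = \|A\|^2$, so $\alpha^2 = \|A\|^2$. Substituting these into the tail bound of Corollary~\ref{SR1} yields
\begin{equation*}
\Pr\bigl(\|A_s^{\intercal}A_s - A^{\intercal}A\| \geq \epsilon\bigr) \leq 2n \exp\!\left(-\frac{3 s \epsilon^2}{(6\|A\|^2 + 2\epsilon)(n + \|A\|^2)}\right).
\end{equation*}

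The remaining work is to choose $s$ so that the right-hand side is at most $\delta$. Using the trivial bound $\|A\|^2 \leq \|A\|_F^2 = n$, we get $n + \|A\|^2 \leq 2n$, so it suffices to require
\begin{equation*}
s \geq \frac{2n\,(6\|A\|^2 + 2\epsilon)}{3\epsilon^2}\log\!\frac{2n}{\delta}.
\end{equation*}
Rewriting $\log(2n/\delta) = (1 - \log_n(\delta/2))\log n$, this becomes exactly $s \geq C\epsilon^{-2} n \log n$ with $C$ as in the statement, completing the argument.

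The only mildly delicate step is the final algebraic manipulation to match the stated constant $C$; everything else is a direct application of Corollary~\ref{SR1}. The important conceptual point, which I would emphasize, is that the squared-row-norm sampling density is precisely what makes $\|y_t\|$ deterministically bounded by $\sqrt{n}$, whereas a uniform sampling density would force $M^2$ to scale like $m\,\mu(A)$ and destroy the bound for coherent $A$.
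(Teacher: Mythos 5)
Your proposal is correct and follows essentially the same route as the paper's own proof: the same identification $A_s^{\intercal}A_s = \frac{1}{s}\sum_t y_t y_t^{\intercal}$ with $y_t = a_{i_t}^{\intercal}/\sqrt{p_{i_t}}$, the same bounds $M^2 = \|A\|_F^2 = n$ and $\alpha^2 = \|A\|^2 \leq n$, the same application of Corollary~\ref{SR1}, and the same algebraic rewriting of $\log(2n/\delta)$ to recover the stated constant $C$. The closing remark about why uniform sampling would fail is a nice addition but not part of the paper's argument.
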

\begin{proof}
 Let $y$ be a random variable taking value $a_i^{\intercal}/ \sqrt{p_i}$ with probability $p_i,  1 \leq i \leq m.$ And $y_k, k=1,2,\ldots, s$ be i.i.d. copies of $y$.  
 Then 
$$
A_s^{\intercal} A_s = \frac{1}{s} \sum_{k=1}^{s} y_k y_k^{\intercal},
$$
and
$$
\mathbb E [y_k y_k^{\intercal}] = \sum_{i=1}^m a_i^{\intercal} a_i = A^{\intercal} A.
$$
Thus we have the bound
$$
\| \mathbb E [y_k y_k^{\intercal}] \| = \| A^{\intercal} A \| = \lambda_{\max} ( A^{\intercal} A ) = \| A \|^2 \leq \| A \|_F^2 = n,
$$
and the bound
\begin{align*}
\| y_k \| &\leq \max_{1\leq i\leq m} \frac{\| a_i \|}{\sqrt{p_i}} = \| A \|_F = \sqrt{n}. \\
\end{align*}

By Corollary~\ref{SR1}, for all $0 \leq \epsilon \leq 1$
$$
{\rm Pr} \left( \| \frac{1}{s} \sum_{k=1}^s y_k y_k^{\intercal} - \mathbb E [y_1 y_1^{\intercal}] \| \geq \epsilon \right) \leq 
%2n \exp \left(-\frac{3 C n \log(n) \epsilon^2}{16 n} \right). 
2n \exp \left( - \frac{3C n \log(n) }{ (6 \|A\|^2 + 2 \epsilon)(n + \|A\|^2)} \right). 
$$
i.e.
\begin{align*}
{\rm Pr} \left( \| A_s^{\intercal} A_s - A^{\intercal} A \| \geq \epsilon \right)  &\leq  2n \exp \left( - \frac{3 C n \log(n) }{(6 \|A\|^2 + 2 \epsilon) (n + \|A\|^2)} \right) \\
&\leq  2n \exp \left( -\frac{ 3 C \log(n)  }{ 2(6 \|A\|^2 + 2 \epsilon) } \right) \\
&=  \delta.
\end{align*}
%To satisfy ${\rm Pr} \left(  \| A_s^{\intercal} A_s - A^{\intercal} A \| \geq \epsilon \right)  < \delta $, we need 
%$$C \geq \frac{2}{3} (6 \|A\|^2 + 2 \epsilon) (1 - \log_n(\delta/2)). $$
\end{proof}

\begin{remark} \rm 
Constant $C$ used in Theorem~\ref{th:approx} is not practical since the lower bound of $C$ is quit big. For example, when $\delta = \frac{1}{20}, \epsilon = \frac{1}{2}, \| A \| \leq1$ and $n = 300$, $C \epsilon^{-2}$ should be greater or equal than $\frac{56}{3}(1+ \log_{300}(40)) \approx 30.74$. In practice, however, $C \epsilon^{-2} =4$ is good enough to get a reasonable sampling matrix. $\Box$
\end{remark}

\begin{corollary}\label{cor:spectrumbound}
With the same setting in Theorem~\ref{th:approx}, the following bound hold with high probability $1-\delta$
$$
\lambda_{\min}(A^{\intercal}A) - \epsilon \leq \lambda_{\min}(A_s^{\intercal}A_s) \leq \lambda_{\max}(A_s^{\intercal}A_s)\leq \lambda_{\max}(A^{\intercal}A) + \epsilon.
$$ 
\end{corollary}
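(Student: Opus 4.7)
The plan is to deduce the corollary directly from Theorem~\ref{th:approx} by viewing $A_s^\intercal A_s$ as a symmetric perturbation of $A^\intercal A$ and applying a standard eigenvalue perturbation bound (Weyl's inequality). Both $A^\intercal A$ and $A_s^\intercal A_s$ are real symmetric, so their extreme eigenvalues are characterized by the Rayleigh quotient, which makes the argument essentially a one-line computation once the spectral-norm bound is in hand.

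First I would set $E = A_s^\intercal A_s - A^\intercal A$, note that $E$ is symmetric, and invoke Theorem~\ref{th:approx} to conclude that, with probability at least $1-\delta$, $\|E\| \leq \epsilon$. Since $E$ is symmetric, this spectral-norm bound is equivalent to the two-sided eigenvalue bound $-\epsilon I \preceq E \preceq \epsilon I$ in the Loewner order. Then I would add $A^\intercal A$ to all three sides to obtain
\begin{equation*}
A^\intercal A - \epsilon I \preceq A_s^\intercal A_s \preceq A^\intercal A + \epsilon I.
\end{equation*}

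Finally, taking the largest eigenvalue of the right inequality and the smallest eigenvalue of the left inequality (using that $\lambda_{\max}$ and $\lambda_{\min}$ are monotone with respect to the Loewner order on symmetric matrices, equivalently by the Courant--Fischer min--max characterization) gives
\begin{equation*}
\lambda_{\max}(A_s^\intercal A_s) \leq \lambda_{\max}(A^\intercal A) + \epsilon, \qquad \lambda_{\min}(A_s^\intercal A_s) \geq \lambda_{\min}(A^\intercal A) - \epsilon,
\end{equation*}
and the obvious inequality $\lambda_{\min}(A_s^\intercal A_s) \leq \lambda_{\max}(A_s^\intercal A_s)$ supplies the middle link in the stated chain.

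There is essentially no obstacle here: the whole content is contained in Theorem~\ref{th:approx}, and the remaining step is a textbook symmetric perturbation argument. The only minor point worth flagging is that the probability of success is inherited verbatim from Theorem~\ref{th:approx} (the Weyl step is deterministic once the event $\{\|E\| \leq \epsilon\}$ occurs), so the conclusion holds with the same probability $1-\delta$.
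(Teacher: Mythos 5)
Your proposal is correct and matches the paper's argument in substance: both reduce the corollary to the event $\|A_s^{\intercal}A_s - A^{\intercal}A\|\leq\epsilon$ from Theorem~\ref{th:approx} and then apply a deterministic perturbation step. The paper phrases that step as a triangle inequality on $\|A_s^{\intercal}A_s x\|$ combined with symmetry, while you phrase it via the Loewner order and Weyl/Courant--Fischer; these are equivalent one-line deductions, and your version arguably handles the lower bound (which the paper leaves as ``proved similarly'') more transparently.
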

\begin{proof}
By the triangle inequality, we immediately get 
\begin{equation}\label{eq:bound}
\| A_s^{\intercal}A_s x \| \leq \| A^{\intercal}A x \| + \epsilon \| x \| \leq \left (\lambda_{\max}(A^{\intercal}A) + \epsilon \right )  \|x \|,
\end{equation}
which implies the desired inequality as $A_s^{\intercal}A_s$ is symmetric. The lower bound of $\lambda_{\min}(A_s^{\intercal}A_s)$ can be proved similarly.
\end{proof}
Notice that the spectrum bound obtained in Corollary~\ref{cor:spectrumbound} will not imply the bound of the preconditioned system $(A_s^{\intercal}A_s)^{-1}A^{\intercal}A$, which requires comparison of $(A_s^{\intercal}A_s x,x)$ and $(A^{\intercal}A x, x)$ for all $x\in \mathbb R^n$.

Next we shall establish some inequalities restricted to high frequency. We call $x \in \mathbb{R}^n$ is a high frequency of matrix $A^{\intercal}A$ if the inequality
\begin{equation}\label{highf}
\lambda_{\max} (A^{\intercal}A) \| x \|^2  \leq C_h (A^{\intercal}A x, x),
\end{equation}
holds with a universal constant. Consider the decomposition of $x$ using the eigen-vector bases of $A^{\intercal}A$. Inequality~\eqref{highf} implies $x$ is mainly expanded by eigen-vectors whose eigenvalues are large. The constant $C_h$ in (\ref{highf}) is introduced to include not only the highest frequency (corresponding to the largest eigenvalue) but a range of frequencies comparable to the highest one. 

\begin{corollary}\label{cor:bound4high}
With the same setting in Theorem~\ref{th:approx}, the following bound hold with high probability $1-\delta$: for all high frequency vectors $x$
$$
\left (1 - C_h \epsilon \right )(A^{\intercal}Ax, x) \leq (A_s^{\intercal}A_s x, x)\leq \left (1 + C_h \epsilon \right )(A^{\intercal}Ax, x).
$$
\end{corollary}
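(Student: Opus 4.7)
The plan is to reduce the conclusion to the spectral bound already guaranteed by Theorem~\ref{th:approx}, and then trade the $\|x\|^2$ that appears on the right-hand side of the operator-norm estimate against $(A^{\intercal}Ax,x)$ using the defining property~\eqref{highf} of high-frequency vectors.

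First, the conclusion is symmetric and equivalent to the single two-sided estimate
\[
|(A_s^{\intercal}A_s x, x) - (A^{\intercal}A x, x)| \leq C_h \epsilon\, (A^{\intercal}A x, x).
\]
So I would start with the identity $(A_s^{\intercal}A_s x, x) - (A^{\intercal}A x, x) = ((A_s^{\intercal}A_s - A^{\intercal}A) x, x)$ and apply Cauchy--Schwarz together with the bound $\|A_s^{\intercal}A_s - A^{\intercal}A\| \leq \epsilon$ from Theorem~\ref{th:approx} (which holds on the event of probability $1-\delta$). This yields
\[
|((A_s^{\intercal}A_s - A^{\intercal}A)x, x)| \leq \epsilon \|x\|^2.
\]

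Next I would invoke the high-frequency hypothesis~\eqref{highf} in the equivalent form $\|x\|^2 \leq \frac{C_h}{\lambda_{\max}(A^{\intercal}A)}(A^{\intercal}A x, x)$ to get
\[
|((A_s^{\intercal}A_s - A^{\intercal}A)x, x)| \leq \frac{C_h \epsilon}{\lambda_{\max}(A^{\intercal}A)}\,(A^{\intercal}A x, x).
\]
To close the gap with the claimed bound $C_h\epsilon(A^{\intercal}A x,x)$, I need $\lambda_{\max}(A^{\intercal}A)\geq 1$. This follows from the normalization $\|A\|_F^2 = n$ assumed in Theorem~\ref{th:approx}, since $\sum_{i=1}^n \lambda_i(A^{\intercal}A) = \|A\|_F^2 = n$ forces $\lambda_{\max}(A^{\intercal}A) \geq 1$. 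Putting the pieces together gives the corollary.

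There is no serious obstacle here; the whole argument is essentially an application of the operator-norm bound plus the high-frequency trade-off. The only subtle point worth flagging is precisely this normalization $\lambda_{\max}(A^{\intercal}A)\geq 1$: without the assumption $\|A\|_F^2 = n$ one would only obtain a bound of the form $(C_h \epsilon/\lambda_{\max}(A^{\intercal}A))(A^{\intercal}A x,x)$, so the statement implicitly requires that the matrix $A$ has been scaled as in Theorem~\ref{th:approx}. This is worth noting in the writeup for clarity.
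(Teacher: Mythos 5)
Your proposal is correct and follows essentially the same route as the paper: apply the spectral-norm bound $\|A_s^{\intercal}A_s - A^{\intercal}A\|\leq\epsilon$ from Theorem~\ref{th:approx}, convert $\epsilon\|x\|^2$ into $\frac{C_h\epsilon}{\lambda_{\max}(A^{\intercal}A)}(A^{\intercal}Ax,x)$ via the high-frequency definition~\eqref{highf}, and then use the trace identity under the normalization $\|A\|_F^2=n$ to get $\lambda_{\max}(A^{\intercal}A)\geq 1$. The only cosmetic difference is that you package the upper and lower bounds into a single absolute-value estimate, whereas the paper writes out the upper bound and notes the lower bound is analogous.
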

\begin{proof}
By the triangle inequality and the definition of high frequency vectors, we will have
$$
(A_s^{\intercal}A_s x, x)\leq (A^{\intercal}Ax, x) + \epsilon(x,x) \leq \left [1 + \frac{C_h \epsilon}{\lambda_{\max}(A^{\intercal}A)}\right ](A^{\intercal}Ax, x),
$$
Now we can bound $\lambda_{\max}(A^{\intercal}A) \geq 1$ by
$$
n \lambda_{\max}(A^{\intercal}A) \geq \sum_{i=1}^n \lambda_{i}(A^{\intercal}A) = \sum_{i=1}^n \sigma_i^2(A) = \| A \|_F^2 = n.
$$
The lower bound can be proved similarly 
%$$
%\left [1 - \frac{C_h \epsilon}{\lambda_{\max}(A^{\intercal}A)}\right ](A^{\intercal}Ax, x) \leq (A_s^{\intercal}A_s x, x).
%$$
\end{proof}
Corollary~\ref{cor:bound4high} implies $(A_s^{\intercal} A_s)^{-1}$ is an effective smoother for $A^{\intercal} A.$ 
Since Gauss-Seidel iteration can smooth out the high frequency very quickly, we apply several symmetric Gauss-Seidel iterations instead of computing $(A_s^{\intercal} A_s)^{-1}$ in practice. 

To illustrate the approximation property of the sampled matrix, we plot the graph of $A^{\intercal} A$ and $A_s^{\intercal} A_s$ below. 
\begin{figure}[htp]
\centering
    \includegraphics[width=5in,height=3in]{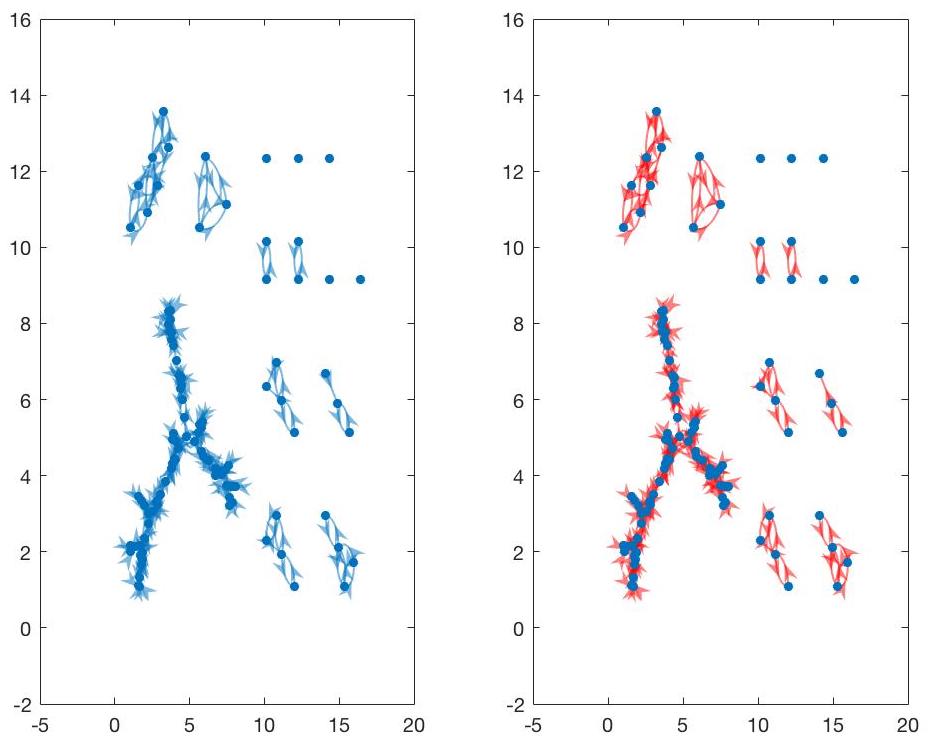}
\caption{Graphs of matrices $A^{\intercal} A$ (left) and $A_s^{\intercal} A_s$ (right). The matrix $A$ is of size $m \times n$, where $m = 9314$ and $n =100$. The sampled matrix $A_s$ is of size $s \times n$ with $s = 1843$ and $n = 100$. The matrix $A$ is rescaled so that the diagonal of $A^{\intercal} A$ is one. The entries which have small absolute values less than a threshold $\theta = 0.125$ in the matrix $A^{\intercal} A$ and $A_s^{\intercal} A_s$ is filtered out and not plot in the graph.}
\end{figure}
The matrix $A$ is of size $m \times n$, where $m = 9314$ and $n =100$. The sampled matrix $A_s$ is of size $s \times n$ with $s = 1843$. The matrix $A$ is rescaled so that the diagonal of $A^{\intercal} A$ is unit. The entries which have small absolute values less than a threshold $\theta = 0.125$ in the matrix $A^{\intercal} A$ and $A_s^{\intercal} A_s$ are filtered out and not shown in the graph. Each edge in the graph represents one entry in the matrix and the thickness of edge represent the magnitude respectively. From the figures, we find out that the two graphs are almost identical, which means the sampling strategy is  able to capture the entries in the normal matrix with a large absolute value which is known as strong connectedness in algebraic multigrid methods~\cite{brandt1982algebraic} and related to the high frequency vectors.

\section{PCG with a Preconditioner based on Row Sampling}
In this section, we present our algorithm by constructing a fast, efficient and easy to implement randomized row sampling preconditioner and apply PCG to solve the normal equation. 
\subsection{Algorithms}
We first normalize the matrix $A$ to make the column vectors have unit length, which enables all diagonal entries of $A^{\intercal} A$ are one and $\|A\|_F^2 = n$. We then apply the row sampling to get a smaller matrix $A_s$ of size $s \times n$ by randomly choosing $s = O(n \log(n))$ rows of the normalized matrix $A_{m \times n}$ with sampling density $p_i = \| a_i\|_2^2/n$. We build our preconditioner by using a few steps of symmetric Gauss-Seidel (SGS) iteration to solve the approximate problem $A_s^{\intercal}A_s e = r$. After all, we apply PCG to the normal equation $A^{\intercal} A x = A^{\intercal} b$ with this preconditioner. 

\begin{algorithm}
\textbf{Input:} $A \in \mathbb R^{m \times n}, b \in \mathbb R^m, $ convergence threshold $\epsilon \in \left( 0, 1\right)$. \\
\textbf{Output:} approximated $\tilde{x}_{\rm{opt}} \in \mathbb R^m. $ 
\begin{enumerate}
\item \textbf{Normalization:} 
$A \leftarrow AD^{-1}$, where $D_{jj} = \|a^j\|_2,$ with $a^j$ being the $j$th column vector of $A$ for $1\leq j \leq n$.  
\item \textbf{Sampling:} Sample the row of $A$ to get $A_s$ with $s = 4 n \log(n)$ by row sampling Algorithm~\ref{alg1} with probability~\eqref{R2P}. 
\item \textbf{Preconditioner:} Construct preconditioner $e= Pr$ by solving $A_s^{\intercal} A_s e =r$ via several symmetric Gauss Seidel iterations; see Algorithm~\ref{alg3}.
\item \textbf{PCG:} Use PCG to solve $A^{\intercal}A x = A^{\intercal}b$ with the preconditioner constructed in Step 3. Stop when the relative residual is below $\epsilon$.
\medskip
\end{enumerate}
  \caption{Randomized Sampling Preconditioned PCG}
  \label{alg2}
\end{algorithm}

For easy of understanding and completeness, the symmetric Gauss-Seidel method is presented below in Algorithm~\ref{alg3}. 

\begin{algorithm}
  \caption{The Preconditioner using Symmetric Gauss-Seidel Iterations.}
  \label{alg3}
\textbf{Input}:  Sampled matrix $A_s \in \mathbb R^{s \times n}$, residual $r \in \mathbb R^n$, number of symmetric Gauss-Seidel iterations $t \in \mathbb Z^+$.\\
\textbf{Output}:  Correction $e \in \mathbb R^n.$ 
\begin{algorithmic}
\FOR {$i = 1:t $} 
\STATE
$$ e \leftarrow e + B^{-1}(r- A_s^{\intercal} A_s e) $$
with $B$ the lower triangular part of $A_s^{\intercal} A_s$. 
\ENDFOR
\FOR {$i = 1:t $} 
\STATE
$$ e \leftarrow e + (B^{\intercal})^{-1}(r- A_s^{\intercal} A_s e)$$
with $B^{\intercal}$ the upper triangular part of $A_s^{\intercal} A_s$. 
\ENDFOR
\end{algorithmic}
\medskip
\end{algorithm}

%------------------------------------------------

\subsection{Complexity}
We compute the complexity of PCG with the randomized sampling preconditioner for both dense matrices and sparse matrices. We use $s = 4n\log(n)$ as the default sample size. Here the factor $4$ is chosen to balance the set up time and solver time. Similarly the number of SGS is set as $5$ to balance the inner iteration of preconditioner and outer iteration of PCG.
%For example, in the `sprand' case with $m = 90000, n = 300.$
%If the sampling size is decreased to $s = 2 n \log(n) \approx 3432$, the PCG iterations increase by about $15\%$. If the sampling size is increased to $s = 8 n \log(n) \approx 13689$, iterations would be $16\%$ less. Thus, we choose $s = 4n\log(n)$ as an optimal point balancing the sampling size and iterations in order to minimize the computation cost.

For dense matrices, in the normalization step, we need $O(mn)$ to calculate the norm of each column $\|a^j\|_2 $ and $O(mn)$ for the matrix multiplication $AD^{-1}$. Sampling costs $O(s n) = O(n^2 \log(n))$. The matrix multiplication $A_s^{\intercal} A_s$ costs $O(s n^2) = O(n^3 \log(n)). $  The preconditioner in PCG, i.e. several symmetric Gauss Seidel iterations for $A_s^{\intercal}A_s e =r $ needs $O(n^2)$. Finally,  PCG iteration steps $k = O(|\log(\epsilon)| \kappa(PA))$ until reaching tolerance $\epsilon$ costs $O(k mn) = O(|\log(\epsilon)| \kappa(PA) mn)$. Note that since we only do matrix vector multiplication with $Ax$ and $A^{\intercal} (Ax)$ instead of matrix product $A^{\intercal} A$, the computation cost for each PCG step is only $O(mn)$ not $O(mn^2)$. Thus the total complexity is $O(|\log(\epsilon)| \kappa(PA) (mn + n^2))+ O(n^3 \log(n))$ when $A$ is dense. 

%Sampling costs $O(s n) = O(n^2 \log(n))$. The matrix multiplication $A_s^{\intercal} A_s$ costs $O(s n^2) = O(n^3 \log(n)). $  The preconditioner in PCG, i.e. several symmetric Gauss Seidel iterations for $A_s^{\intercal}A_s e =r $ needs $O(n^2)$. Finally,  PCG iteration steps $k = O(|\log(\epsilon)| \kappa(PA))$ until reaching tolerance $\epsilon$ costs $O(k mn) = O(|\log(\epsilon)| \kappa(PA) mn)$. Note that since we only do matrix vector multiplication with $Ax$ and $A^{\intercal} (Ax)$ instead of matrix product $A^{\intercal} A$, the computation cost for each PCG step is only $O(mn)$ not $O(mn^2)$. Thus the total complexity is $O(|\log(\epsilon)| \kappa(PA) (mn + n^2))+ O(n^3 \log(n))$ when $A$ is dense. 
 
Complexity would be reduced significantly when the matrix is sparse. Let $nnz(M)$ be the number of nonzero elements of matrix $M$. In the normalization step, the cost is reduced to $O(nnz(A))$ for both the column calculation and matrix multiplication $AD^{-1}$. In the sampling step, depending on the sparse pattern, the complexity is reduced to at most $O(nnz(A))$. 
The matrix product of $A_s^{\intercal} A_s$ costs between $O(nnz(A_s))$ and $O(n \cdot nnz(A_s))$.
The preconditioner costs $O(nnz(A_s^{\intercal} A_s))$. And $k= O(|\log(\epsilon)| \kappa(PA))$ PCG iterations needed. The total complexity is 
$O(|\log(\epsilon)| \kappa(PA) (nnz(A)  + nnz(A_s^{\intercal} A_s))) + O(\alpha nnz(A_s))$ for sparse matrices, where $\alpha \in [1,n]$ depends on the sparse pattern of $A_s$. For sparse matrix $A$ with $nnz(A)\ll mn$, the proposed solver is thus more efficient. 

\begin{table}
\caption{Complexity of Algorithm 2 for Dense and Sparse Matrices}
\begin{center}
\begin{tabular}{|c|c|c|}
\hline
& Dense Matrix & Sparse Matrix\\
\hline 
Normalization & $O(mn)$ & $O(nnz(A))$\\
\hline
Sampling & $O(n^2 \log(n))$ & $O(nnz(A))$ \\
\hline
$A_s^{\intercal}A_s$ & $O(n^3 \log(n))$ & $O(nnz(A_s))$ to $O(n \cdot nnz(A_s))$\\
\hline
Preconditioner & $O(n^2)$ & $O(nnz(A_s^{\intercal} A_s))$ \\
\hline
CG iteration & $O(|\log(\epsilon)| \kappa(PA) mn)$ & $O(|\log(\epsilon)| \kappa(PA) nnz(A))$\\
\hline
\end{tabular}
\end{center}
\label{default}
\end{table}%

Theoretically we cannot find a uniform control of the condition number of the preconditioned matrix $PA$ but we shall show in the next section that numerically our preconditioner is effective. 

%------------------------------------------------

\section{Numerical Results}

We shall compare PCG with our randomized sampling preconditioner, denoted by suffix RS, with CG for the normalized matrix, denoted by suffix CG, which is equivalent to use PCG for the original matrix with a diagonal preconditioner. The column with prefix `Setup' in tables is the CPU time for preprocess including sampling and normalization for RS and only normalization for CG. The column `Time' is the CPU time for iterative methods. Thus the sum of these two are the CPU time for the whole procedure. The column `$\kappa(A^{\intercal} A)$' lists the condition number of $A^{\intercal} A$ and $\mu(A)$ is the coherence number with normalized $A$. We list the coherence number here to emphasize the weighted row sampling works well and robust to the coherence number. It is shown in \cite{BLEN2010} that the uniform sampling fails when the coherence number of the matrix is large. In our sampling algorithm, we use the sampling density proportional to the squared norm of each row. Tolerance for PCG or CG is set $10^{-7}$. Notice that iterative methods may end without reaching the tolerance. 

When varying the size of the matrix, we report one realization of the sampling algorithm. 
As the sampling contains randomness, for each example, we shall also pick up one typical matrix and run our solver $10$ times and compute the mean and standard derivation. 

We tested several classes of matrices -- including well conditioned matrices, ill conditioned matrices, incoherent matrices and coherent matrices; see Table~\ref{table:allmatrx}. 

\begin{table}
\centering
\caption{Classes of Matrices}
\label{table:allmatrx}
\begin{tabular}{|*{3}{c|}}\hline
 & incoherent & coherent \\\hline
 well conditioned &Gaussian (Example 1) & semi Gaussian  (Example 2)\\\hline
 ill conditioned & UDV, sprand  (Example 3, 4) &  random graph Laplacian  (Example 5)\\\hline
\end{tabular}
\end{table}

\subsection{Gaussian Matrix}
The Gaussian matrix is constructed by MATLAB command \texttt{A = randn(m,n)} with each entry of $A$ being generated independent and identically by a standard normal random variable. The matrix $A^{\intercal}A$ has a small condition number followed by Bai and Yin~\cite{EIGSC2008}. 
%
%Theorem 1 in~\cite{EIGSC2008} claims that if a matrix $X_{p \times n}$ with each element being a random number with mean zero and variance $1$ generated independent and identically, and let 
%$$
%S = \frac{1}{n} X X^{\intercal}. 
%$$  
%Then, if $\mathbb E|X_{11}|^4 \leq \infty,$ as $n \rightarrow \infty, p \rightarrow \infty, p/n \rightarrow y \in (0,1),$
%$$
%\lim_{n \rightarrow \infty} \lambda_{\min} (S)= (1 - \sqrt{y})^2 \quad \text{a.s.}
%$$
%$$
%\lim_{n \rightarrow \infty} \lambda_{\max} (S)= (1 + \sqrt{y})^2 \quad \text{a.s.}
%$$
%where $\lambda_{\min}$ and $\lambda_{\max}$ are the smallest and largest eigenvalues of $S$ respectively. 
%
%In our case, $\frac{1}{m} A^{\intercal} A$ is the sample covariance matrix. 
By Theorem 2 in~\cite{EIGSC2008}, the limit of condition number of $A^{\intercal} A$ can be calculated as
$$
\kappa(A^{\intercal} A) = \frac{\lambda_{\max} (A^{\intercal} A)}{ \lambda_{\min} (A^{\intercal} A)} 
\rightarrow \frac{m (1 + \sqrt{n/m})^2 }{ m (1 - \sqrt{n/m})^2 } = \frac{\sqrt{m} + \sqrt{n}}{\sqrt{m} - \sqrt{n}}.
$$
%
%Thus $A^{\intercal} A$ is well conditioned as long as the matrix size $n,m$ is large enough, and $A$ has a rectangular shape, i.e. $m \gg n$. 
When $m = n^2,$ $\kappa(A^{\intercal} A) \approx (\sqrt{n} +1)^2/(\sqrt{n}-1)^2 \approx 1 + O(1/\sqrt{n})$ almost surely.

Since each element is generated independent and identically, the $Q$ factor of $A$'s $QR$ decomposition has evenly distributed magnitude in each row. Thus the coherence number $\mu(A)$ of Gaussian matrix is also small. In summary the Gaussian matrix belongs to the category-- `well conditioned and incoherent matrices'. The sampling density~\eqref{R2P} is almost uniform.

\begin{table}[ht]
\centering
\caption{Gaussian Matrix: Residual and Iteration Steps}
\begin{tabular}{|*{9}{c|}}
\hline
$n$   & $m$    & $nnz(A)$    & $\kappa(A^{\intercal} A)$ & $\mu(A)$   & Residual.CG & Iter.CG & Residual.RS & Iter.RS \\\hline
    109 &    3000 &     11881 &   8.39  &  0.05 &   5.03e-08  &   10     &    7.00e-08  &   11  \\\hline   
    141  &   5000   &   19881   &  8.30  &  0.01 &   8.60e-08  &    9    &     5.19e-08 &    11  \\\hline   
    200  &  10000 &     40000  &   7.70   &  0.02  &  1.70e-08  &    9     &     4.42e-08    & 11  \\\hline	   
    282  &  20000 &     79524   & 7.22   &  0.05  &  4.03e-08    &  8    &     2.70e-08   &  11    \\\hline 
    400  &  40000  &  160000  &   7.40   &  0.09 &   9.62e-08   &   7   &      2.64e-08  &   11  \\\hline   

\end{tabular}
\end{table}
\hfill
\begin{table}[!htbp]
\centering
\caption{Gaussian Matrix: Elapsed CPU Time}
\begin{tabular}{|*{8}{c|}}\hline
$n$   & $m$    & Time.CG & Setup.CG & Sum.CG & Time.RS & Setup.RS & Sum.RS \\\hline
     109 &    3000 &   2.56e-03 &    2.32e-03 &  4.88e-03 &  3.33e-03 &   6.33e-03   &  9.66e-03 \\\hline
    141  &   5000  &  5.34e-03 &     5.56e-03 &  1.09e-02 &  8.32e-03 &   1.30e-02    & 2.13e-02 \\\hline
    200  &  10000  &  1.90e-02 &   1.95e-02 &    3.84e-02 &  2.51e-02 &   4.05e-02 &   6.56e-02 \\\hline
    282  &  20000  &   3.46e-02 &    4.34e-02 &   7.80e-02 &   4.09e-02 &   7.31e-02 &    1.14e-01 \\\hline
    400  &  40000  &   1.00e-01 &    1.26e-01 &    2.27e-01 &   1.43e-01 &   2.06e-01 &     3.48e-01 \\\hline
\end{tabular}
\end{table}

\begin{table}[!htbp]
\centering
\caption{Gaussian Matrix: Mean and Sample Standard Deviation}
\begin{tabular}{|*{8}{c|}}\hline
$n$   & $m$  &  Iter.Mean &  Iter.Std &  Time.Mean &  Time.Std   &  Setup.Mean     &    Setup.Std \\\hline
     109 &    3000 &     11    &  0   &   3.37e-03   & 5.39e-04 &  5.64e-03   &           9.15e-04\\\hline	
    141  &   5000  &    11     &   0    &   1.0e-02    &1.58e-03   &   1.40e-02     &       1.80e-03\\\hline
    200  &  10000  &   11     &    0   &    1.47e-02   &   2.62e-03  &   2.56e-02    &       4.18e-03\\\hline
     282  &  20000  &  11      &     0    &  4.04e-02    &  3.59e-03   & 7.25e-02      &      7.14e-03\\\hline
      400  &  40000  &   10.9     &    0.31 &   1.40e-01   &  5.94e-03   &    2.12e-01     &      1.42e-02\\\hline
\end{tabular}
\end{table} 

%------------------------------------------------
\subsection{`Semi Gaussian' Matrix}
The `semi Gaussian' matrix used in~\cite{BLEN2010} has the following block structure. The left upper block $B$ is a Gaussian matrix of size $(m-n/2) \times n/2$ and the right lower block $I_{n/2}$ is an identity matrix of size $n/2 \times n/2$. 
$$
A_{m \times n} =
 \begin{bmatrix}
  B & 0 \\
   0 & I_{n/2}
 \end{bmatrix}. 
$$
It belongs to the category-- `well conditioned and coherent matrices'. 
 
For such `semi Gausian' matrices, the coherence number $\mu(A) = 1$ due to the presentness of an identity sub-matrix. It is shown in~\cite{BLEN2010} that the uniform sampling fails for this example while our non-uniform sampling works well. To use random transformations, a small perturbation $10^{-8}$ is added to every entry of $A$ to change it to a dense matrix.

They are also well conditioned since 
$$
A^{\intercal} A = \begin{bmatrix} B^{\intercal} & 0 \\ 0 & I \end{bmatrix} 
\begin{bmatrix} B & 0 \\ 0 & I \end{bmatrix} = 
\begin{bmatrix} B^{\intercal} B & 0 \\ 0 & I \end{bmatrix},
$$
and when $\lambda_{\max}(B^{\intercal} B)\geq 1$
$$
\kappa(A^{\intercal} A)  \leq \kappa(B^{\intercal} B). 
$$
The Gaussian matrix $B$ is well conditioned by the analysis in \S 5.1. So is $A$.

\begin{table}[!htbp]
\centering
\caption{`Semi Gaussian' Matrix: Residual and Iteration Steps}
\label{table_semi}
\begin{tabular}{|*{9}{c|}}
\hline
$n$   & $m$    & $nnz(A)$    & $\kappa(A^{\intercal} A)$ & $\mu(A)$   & Residual.CG & Iter.CG & Residual.RS & Iter.RS   \\\hline
    62   &  1000   &   992   & 1.99e+03 & 1  &  2.25e-08  &   9   &       6.85e-08  &   11      \\\hline
    108  &   3000  &   2970  &  5.82e+03  &  1 &  6.87e-08  &   8  &    3.70e-08  & 12        \\\hline
    140   &  5000   &  4970   &   9.80e+03   & 1   &     2.43e-08  &   8    &      6.09e-08   &  11     \\\hline
    200   & 10000   & 10100  &   1.95e+04 &  1 &    6.48e-08  &   7     &     6.79e-08    & 11       \\\hline
    282   & 20000   & 20022    & 3.84e+04  & 1  &     2.28e-08 &   7    &   7.37e-08   &  11       \\\hline
\end{tabular}
\end{table}
\begin{table}[htb]
\centering
\caption{`Semi Gaussian' Matrix: Elapsed CPU Time}
\begin{tabular}{|*{8}{c|}}\hline
$n$   & $m$        & Time.CG & Setup.CG & Sum.CG & Time.RS & Setup.RS & Sum.RS  \\\hline
     62  &   1000  &      7.03e-04   &    3.96e-04    & 1.10e-03 &     1.78e-03   &    2.16e-03  &3.94e-03    \\\hline
    108  &   3000  &     2.31e-03  &       2.95e-03  & 5.26e-03&      4.03e-03 &      7.25e-03  & 1.13e-02   \\\hline
    140  &   5000  &      5.14e-03    &   6.31e-03  &   1.14e-02&    9.95e-03 &     1.49e-02  & 2.49e-02   \\\hline
    200  &  10000 &      1.54e-02   &    1.77e-02  &    3.31e-02&    2.47e-02 &      3.84e-02  & 6.31e-02  \\\hline
    282  &  20000 &      3.19e-02 &     4.28e-02   &    7.47e-02&  4.36e-02 &      7.83e-02  &  1.22e-01  \\\hline
\end{tabular}
\end{table}

\begin{table}[!htbp]
\centering
\caption{`Semi Gaussian' Matrix: Mean and Sample Standard Deviation}
\begin{tabular}{|*{8}{c|}}\hline
$n$   & $m$        &  Iter.Mean  &Iter.Std  &   Time.Mean &  Time.Std   &  Setup.Mean   &   Setup.Std \\\hline
 62  &   1000  &    11.7  & 0.68 &       1.18e-03 &   7.75e-05  &  1.35e-03  &      4.27e-04 \\\hline
 108  &   3000  &     11.7    & 0.48 &      2.65e-03 &   1.30e-04   &  5.31e-03   &      3.12e-03 \\\hline
 140  &   5000  &11.9  &  0.57 &      6.36e-03 &   8.77e-04 &   1.14e-02   &        4.87e-03 \\\hline
200  &  10000 &  11.7   & 0.67 &         1.51e-02 &    2.04e-03  &    2.69e-02  &          5.00e-03 \\\hline
282  &  20000 &  11.3    &    0.48 &     4.51e-02 &   6.93e-03 &   7.69e-02   &          8.21e-03 \\\hline
\end{tabular}
\end{table} 

\newpage

%------------------------------------------------
\subsection{`Sprand' Matrix}
The `sprand' (sparse random) matrix is generated by Matlab function \texttt{A = sprand(m,n,s,1/c),}
where $m$ is the number of rows, $n$ is the number of columns, $s$ is the sparsity and $c$ is the estimated condition number.  We can control the condition number by the input $c$. When $c$ is large, the generated matrix is ill conditioned. The coherence number is still small due to the randomness. Thus it belongs to the category  `ill conditioned and incoherent matrices'.

To test the robustness to the condition number, we fix $m = 90000, n =300$ and the sparsity $s = 0.25$ and change $c$ to get several matrices with large condition number; see Table~\ref{table:sprand}-\ref{table:sprandTime}. 
\begin{table}[!htbp]
\centering
\caption{`Sprand' Matrix $m = 90000, n =300$: Residual and Iteration Steps.}
\label{table:sprand}
\begin{tabular}{|*{7}{c|}}
\hline
 $nnz(A)$    & $\kappa(A^{\intercal} A)$ & $\mu(A)$   & Residual.CG & Iter.CG & Residual.RS & Iter.RS \\\hline
 87248 &       3.87e+03 &   7.17e-03 &   9.05e-08 & 98 & 9.97e-08 & 21    \\\hline 
  87208  &       1.91e+04  &  5.86e-03  & 8.70e-08 & 181 & 6.80e-08 & 38  \\\hline
 86278   &      7.55e+04  & 3.91e-03   & 8.36e-08 & 264 & 7.94e-08 & 58 \\\hline
 86654   & 2.89e+05 &  7.81e-03  &  9.17e-08 & 233 & 9.37e-08 & 50 \\\hline
  86816   & 7.40e+05  &  7.71e-03  &  8.18e-07 & 296 & 4.77e-08 & 70 \\\hline
\end{tabular}
\end{table}

\begin{table}[!htbp]
\centering
\caption{`Sprand' Matrix $m = 90000, n = 300$: Elapsed CPU Time}
\label{table:sprandTime}
\begin{tabular}{|*{7}{c|}}\hline
 $nnz(A)$  & Time.CG & Setup.CG & Sum.CG & Time.RS & Setup.RS& Sum.RS \\\hline
87248 &    1.66   &  0.35  &    2.01  &  0.52  &  0.68    &  1.20 \\\hline
87208 &    3.23   &  0.43  &   3.65  &  0.88  &   0.65   &  1.53\\\hline
86278 &  4.39  &    0.33  &    4.72  &   1.29  &  0.57  &   1.86 \\\hline
86654 &  3.92 &    0.35  &   4.27  &   1.13   & 0.56  &   1.69 \\\hline
86816 &  5.00 &    0.33 &     5.33  &   1.56   & 0.54   & 2.10 \\\hline
\end{tabular}
\end{table}

\begin{table}[!htbp]
\centering
\caption{`Sprand' Matrix $m = 90000, n = 300$: Mean and Sample Standard Deviation }
\begin{tabular}{|*{6}{c|}}\hline
  Iter.Mean & Iter.Std  &  Time.Mean &   Time.Std  &   Setup.Mean  &  Setup.Std \\\hline
     23.3   &  1.95 &      0.57 &   4.45e-02 &   0.59  &         3.73e-02 \\\hline
     39    &   1.70 &      0.92 &   4.13e-02 &  0.57   &   1.55e-02 \\\hline
    60.9   &   3.63 &      1.41 &   7.98e-02 &    0.56   &      1.03e-02 \\\hline 
    51.2   &    2.44 &    1.18 &   5.73e-02 &   0.55       &        2.61e-02 \\\hline 
  69.4   &   2.63 &       1.60 &   6.14e-02 &  0.56     &         1.85e-02 \\\hline 
\end{tabular}
\end{table} 

Notice that for very ill-conditioned matrices, CG without preconditioners will not reach the tolerance $10^{-7}$; see row $5$ in Table~\ref{table:sprand}. Although theoretically CG will result in the exact solution within at most $n$-steps, the large condition number causes the instability. Our preconditioner is effective and PCG converges within $100$ steps.

\begin{table}[!htbp]
\centering
\caption{`Sprand' Matrix $m = 40000$: Mean and Sample Standard Deviation }
\begin{tabular}{|*{8}{c|}}\hline
$n$ &$\kappa(A^{\intercal} A)$  &  Iter.Mean &  Iter.Std & Time.Mean &  Time.Std &  Setup.Mean    &  Setup.Std \\\hline
  50    &       28323     &    17.2   &    1.48 &     0.03 &   7.74e-03  &     0.04  &      1.37e-02 \\\hline
    100   &        31278  &      26.3   &   1.70 &     0.09 &  6.29e-03 &      0.06 &          2.42e-03 \\\hline
    200   &        60858   &    54.6   &    2.46 &    0.36 &   2.00e-02 &   0.16   &          5.84e-03 \\\hline
    400   &        88807     &    72.1     &   4.84 &     1.07 &     8.05e-02 &    0.62  &            1.91e-02 \\\hline
    800   &   1.13e+05   &    86.3      &   2.41 &       3.00 &       8.59e-02 &     3.04 &      8.30e-02 \\\hline
\end{tabular}
\label{tb:m40000}
\end{table} 

We fix $m = 40,000,c = 100, s = 0.25$ and vary $n$ in Table~\ref{tb:m40000}. Again our preconditioned PCG works well.

\begin{table}[!htbp]
\centering
\caption{`Sprand' Matrix $n = 200$: Mean and Sample Standard Deviation }
\begin{tabular}{|*{8}{c|}}\hline
$m$ &$\kappa(A^{\intercal} A)$   &  Iter.Mean & Iter.Std &   Time.Mean &  Time.Std &   Setup.Mean      &  Setup.Std \\\hline
 10000 &   58417    &   54.2      &3.82 &    0.11 &    1.12e-02 &   0.07      &           4.95e-03 \\\hline
    20000 &   48309   &     39.5   &   1.78 &  0.14 &   7.75e-03 &    0.10   &       6.81e-03 \\\hline
    40000  &  69855     &     45.2   &  3.36 &   0.32 &    2.55e-02 &  0.17     &     5.40e-03 \\\hline
    70000 &   49447  &   46.8    &2.66 &      0.56 & 2.85e-02 &     0.26    &     9.24e-03 \\\hline
    90000 &   73177   &      56     & 2.21 &   0.86 &   3.08e-02 &  0.34    &     9.22e-03 \\\hline
\end{tabular}
\label{tb:n200}
\end{table} 

We then fix $n = 200$ and $c = 100$ and vary $m$ in Table~\ref{tb:n200}. The iteration steps are almost uniform to $m$. Notice that for fixed $n = 200$, the sample size $s = 4n\log n \approx 4239$ which is a small portion for large $m$.

%------------------------------------------------
\subsection{UDV Matrix}
The UDV matrices are random matrices generated by 
$
A = U D V,
$
where $U$ is an $m \times n$ random orthonormal matrix, $V$ is an $n \times n$ random orthonormal matrix and 
$D = {\rm diag}[1,1+(c-1)/n,\cdots,c]$ and $c$ is the estimated condition number. For this kind of matrices, we can control the condition number by parameter $c$. When $c$ is large enough, it belongs to the category `ill conditioned and incoherent matrices'.
 \\

\begin{table}[!htbp]
\centering
\caption{`UDV' Matrix $m = 90000, n = 300, nnz(A) = 90000$: Residual and Iteration Steps}
\label{table:UDV}
\begin{tabular}{|*{6}{c|}}
\hline
 $\kappa(A^{\intercal} A)$ & $\mu(A)$   & Residual.CG & Iter.CG & Residual.RS & Iter.RS  \\\hline
 5936    & 4.81e-03  &   9.68e-08  &   116   &     7.21e-08 &    24   \\\hline  
  18853   & 4.61e-03  & 9.68e-08   &  202     &   8.81e-08  &   38     \\\hline
   1.44e+05   &4.66e-03 &    4.42e-07 &     294 &       8.44e-08&     68     \\\hline
4.75e+05   & 4.65e-03 &  1.32e-05  &   369    &      7.92e-08   &  86     \\\hline
 1.07e+06   & 4.72e-03  &   9.89e-06   &  253     &   4.44e-08  &   91     \\\hline  
\end{tabular}
\end{table}

Again CG fails to converge for the last three matrices in `UDV' group when the condition number is large while PCG with our proposed preconditioner works well. 

\begin{table}[!htbp]
\centering
\caption{`UDV' Matrix $m = 90000, n = 300, nnz(A) = 90000$: Elapsed CPU Time}
\begin{tabular}{|*{8}{c|}}\hline
 $\kappa(A^{\intercal} A)$ & $\mu(A)$    & Time.CG & Setup.CG & Sum.CG & Time.RS & Setup.RS & Sum.RS \\\hline
 5936    & 4.81e-03  &  2.44 &       0.42 & 2.86   &  0.49 &  0.47 & 0.96 \\\hline 
  18853   & 4.61e-03  &  3.77 &     0.43   & 4.20 & 0.71 &  0.45 & 1.16 \\\hline 
  1.44e+05   &4.66e-03 &   5.47 &   0.42  & 5.89&  1.30 &   0.45 & 1.75\\\hline 
4.75e+05   & 4.65e-03 &    5.88 &    0.44 & 6.32 & 1.57 &   0.45 & 2.02 \\\hline
1.07e+06   & 4.72e-03  &  5.53 &   0.43   & 5.96 &   1.65 &   0.44 & 2.09  \\\hline 
\end{tabular}
\end{table}

\begin{table}[!htbp]
\centering
\caption{`UDV' Matrix $m = 90000, n = 300, nnz(A) = 90000$: Mean and Sample Standard Deviation }
\begin{tabular}{|*{7}{c|}}\hline
  $\kappa(A^{\intercal} A)$ & Iter.Mean & Iter.Std  &   Time.Mean &  Time.Std   &  Setup.Mean   &   Setup.Std \\\hline
        5936    & 23.1     &    0.57 &     0.51   &    7.8e-02  &  0.45 &        3.79e-02 \\\hline
          18853   &  38.8  &       0.63 &    0.76  &  7.51e-02 &  0.43 &      6.91e-03 \\\hline
    1.44e+05  &    72    &      0.82 &      1.42 &      1.67e-01 &  0.48 &       9.76e-02 \\\hline
    4.75e+05   & 86.4  &        0.52 &     1.86 &      2.37e-01 &  0.47 &       5.84e-02 \\\hline
    1.07e+06  &  90.2  &       0.42 &       1.81 &    1.98e-01 &   0.48 &       5.91e-02 \\\hline
\end{tabular}
\end{table} 

\subsection{Random Graph Laplacian Matrix}
In this section, we shall show our least squares solver can be applied to solve the random graph Laplacian problem which has important application in spectral clustering, text mining and web applications etc~\cite{Chung:1996Spectral}.

%\subsubsection{Introduction of Graph Laplacian Matrices}
%\begin{definition}
A simple graph is an undirected graph without multiple edges or loops. It can be described by a set $V = \{v_1, \cdots v_n\}$ of vertices and a set of edges $E = \{ (v_i, v_j) \}$. 
For a vertex $v_i$, $\deg(v_i)$ counts the number of edges attached to it.
The degree matrix $D$ of a graph is a diagonal matrix consisting of degree of each vertex, i.e. $D_{ii} = \deg(v_i)$ for $i=1,2,\ldots, n$. 
The adjacency matrix $A$ of a graph is defined as
$$
A(i,j) = 
\begin{cases}
1 & \text{if }~\left( v_i, v_j \right) \in E ; \\
0 & \text{otherwise}.
\end{cases}
$$

Given a simple graph $G$ with $n$ vertices, its graph Laplacian matrix $L_{n \times n}$ is defined as
$$
L := D - A.
$$
%where $D$ is the degree matrix and $A$ is the adjacency matrix. 
%The elements of $L$ are given by 
%$$
%L(i,j) = 
%\begin{cases}
%\deg(v_i) \,\,\, \text{if} ~ i=j;\\
%-1 \,\,\,\,\, \text{if there is some edge}  ~\left( v_i, v_j \right) \in E ; \\
%0 \,\,\,\,\, \text{otherwise}.
%\end{cases}
%$$
%
Vertices with zero degree are isolated, i.e. not connected to other vertices. For a simple graph without isolated vertex, the normalized graph Laplacian matrix is defined by normalizing the diagonal of $L$
$$
L_N  := I - D^{-1/2} A D^{-1/2}.
$$
Therefore the elements of $L^N$ are given by 
$$
L_N(i,j) = 
\begin{cases}
1 & \text{ if } i = j;\\
-\frac{1}{\sqrt{\deg(v_i) \deg(v_j)}} & \text{ if } i \neq j \text{ and } \left( v_i, v_j \right) \in E; \\
0 & \text{otherwise}.
\end{cases}
$$
We shall focus on solving the normalized graph Laplacian matrix. 

%In order to assemble the normalized graph Laplacian matrix $L_N$, 
We now reformulate the graph Laplacian matrix as the normal matrix of a least square problem. Given a simple graph $G$, endow an orientation to each edge in $E$ to get a directed graph. 
The incidence matrix $B$ is the matrix representation of a directed graph. Let $m$ be the number of edges and recall that $n$ is the number of vertices. Matrix $B$ is of size $m\times n$ and each row represents a directed edge.
Given any edge $e = (u,v) $, we let $s(e) = u$ be the source of $e$ and $t(e)=v$ be the target of $e$, which means the edge $e$ is leaving vertex $u$ and entering vertex $v$, respectively. Let
$$
B(i,j) = 
\begin{cases}
1 \,\,\,\text{if} \,\,\, s(e_i) = v_j;\\
-1 \,\,\,\text{if}\,\,\, t(e_i) = v_j;\\
0 \,\,\,\text{otherwise}. 
\end{cases}
$$
It is easy to verify the relation between degree matrix, adjacency matrix and incidence matrix is 
$$
B^{\intercal} B = D - A. 
$$
And the matrix product $B^{\intercal} B$ is independent of orientation of each edge. Notice that $B$ is sparse with $nnz(B) = 2m$. 
%Since change an orientation is to right multiply $B$ with a diagonal matrix $W$ with elements equal either $1$ or $-1$. Then
%$$
%(WB)^{\intercal} (WB) = B^{\intercal} W^{\intercal} W B = B^{\intercal} B.
%$$
%The last equality comes from $W^{\intercal} W = I. $This also exerts an heuristic idea on the definition of Graph Laplacian matrix. 

%Remark: In the incidence matrix $B$, each row has at most two nonzero elements since for each edge in graph, it has exactly two vertices. 

Obviously $B^{\intercal}B$ is symmetric and semi-positive definite. 
As the sum of each row of $B$ is $0$, the constant vector is in the null space of $B$, i.e. $B \boldsymbol c = 0,$ which implies $L_N$ is singular. Furthermore the multiplicity of zero eigenvalue is the number of connected component $G$.

\subsubsection{Random Graph Model}
%Random graph means this graph was sampled out of a set of graphs according to a probability distribution \cite{random_graph}. 
We shall use the random graph model presented in ~\cite{Chung_2003}. The spectra of adjacency matrix $A$ follows the power law which enables us to construct an ill-conditioned random graph Laplacian matrix. The construction of a random graph follows several steps. Let $(w_1, w_2, \ldots, w_n)$ be a sequence of positive numbers which denotes the expected degree of $i$-th node. Introduce two parameters $d,m$ for the average degree and maximum degree respectively. 
%We follow the paper\cite{Chung_2003} to construct ill-conditioned random graph Laplacian matrix. 

%The two random graph models introduced here are random binomial graph $G(n,p)$ and uniform random graph $G(m,n)$ \cite{random_graph}. Our model is similar to random binomial graph $G(n,p)$ but different from both of them. 

Following~\cite{Chung_2003}, we shall chose a sequence $w = (w_1,\cdots,w_n)$ following a power law: 
$$
w_i = ci^{-1/(\beta -1)} \quad \text{for} ~ i_0 \leq i \leq n+i_0,
$$
where $c$ is a number determined by the average degree $d$ and $i_0$ depends on the maximum degree $m$, i.e.
$$
c = \frac{\beta -2 }{\beta -1} d n^{-1/(\beta -1)}, ~~ i_0 = n \left[ \frac{d (\beta -2)}{m (\beta -1)} \right]^{\beta -1}.
$$

%The parameter 
%$\beta$ is some number determined by the nature of graph. 
%By the consequence of spectra of random graphs with given expected degrees, 
Many massive graphs are power-law graphs with $2 \leq \beta \leq 3$. 
For some specific graphs like Internet graph~\cite{Furedi:1981aa}, $2.1< \beta < 2.4$. Hollywood graph~\cite{Faloutsos_1999} has exponent $\beta \approx 2.3.$ The corresponding normalized graph Laplacian matrix, is well conditioned and relatively easy to solve by CG. 

By our numerical experiment, the larger $\beta$ is, the larger probability we can get an ill conditioned graph Laplacian matrix. In our test, we fix $i_0 = 11$, choose $\beta = 5, d = 30$ for the first incidence matrix $B_1$, and $\beta = 8, d = 5n$ for the second incidence matrix $B_2$, where $n$ is the number for vertices. $B_1$ is the incidence matrix related to a random graph with less edges while it is ill-conditioned. $B_2$ corresponds to a dense random graph which is usually well-conditioned. We glue two graphs together to get an ill-conditioned graph Laplacian  with $O(n^2)$ edges.

\begin{enumerate}
\item 
Construct a random graph $G$ with edge connected vertices $v_i$ and $v_j$ by probability $P(i,j)$, where
$$
P(i,j) = w_i w_j \rho, \quad \text{with } \rho = 1/ \sum_{i=1}^n w_i.
$$
Then the upper triangular part ($i \leq j$) of adjacency matrix is a random Bernoulli matrix
$$
A_u(i,j) = 
\begin{cases}
1 \,\,\,\text{with probability} ~ P(i,j);\\
0 \,\,\,\text{otherwise}.
\end{cases}
$$
Since the adjacency matrix for an undirected graph is symmetric, we set 
$$
A = A_u^{\intercal} + A_u. 
$$

\item 
Extract positions of all nonzero entries from matrix $A^{\intercal} A$ by \texttt{[i,j,s] = find(A'*A)} and the $3$ vectors $i,j,s$ are the coordinate formate to represent a sparse matrix, i.e. $i(k) = i$, $j(k) = j$, and $s(k) = (A^{\intercal} A)_{ij}$. The incidence matrix $B_1$ can be constructed by edge index $(i,j)$ and distribute $1$ for first vertex $v_i$ and $-1$ for second vertex $v_j$. 

\item Construct another incidence matrix $B_2$ using the same procedure but with different parameters $\beta = 8, d = 5n$. The underlying graph of $B_2$ will have more edges while the corresponding Laplacian matrix is well conditioned.

\item Combine the two incidence matrices $B_1$ and $B_2$ with $5$-overlap in vertices, i.e., construct a combination matrix $B_{\rm{com}}$ by 
\texttt{
B{com}(1:m1, 1:n) = B1; ~ B{com}(m1+1:m2, n-4:2n-5) = B2;
}\\
where recall that $B_1$ of size $m_1 \times n$, $B_2$ of size $m_2 \times n$. 
%We want to combine these two graphs with some common vertices. This is the reason we have $5$-overlap here. 
%\\

\item Filter out the isolated vertices in graph corresponding to matrix $B_{\rm{com}}$.
% as a random graph, it is possible that there are vertices with zero degree, i.e., they are isolated and not connected to other vertices. Such isolated vertex will be removed to get a smaller matrix $B_{\rm{new}}$. 
The final incidence matrix $B = B_{\rm{new}}.$
\end{enumerate}

Our algorithm will sample $B$ to get a graph with much fewer edges. A random graph and its sampling are shown in Fig.~\ref{fig:random_graph}.
\begin{figure}[htp]
\centering
    \includegraphics[width=5in,height=3in]{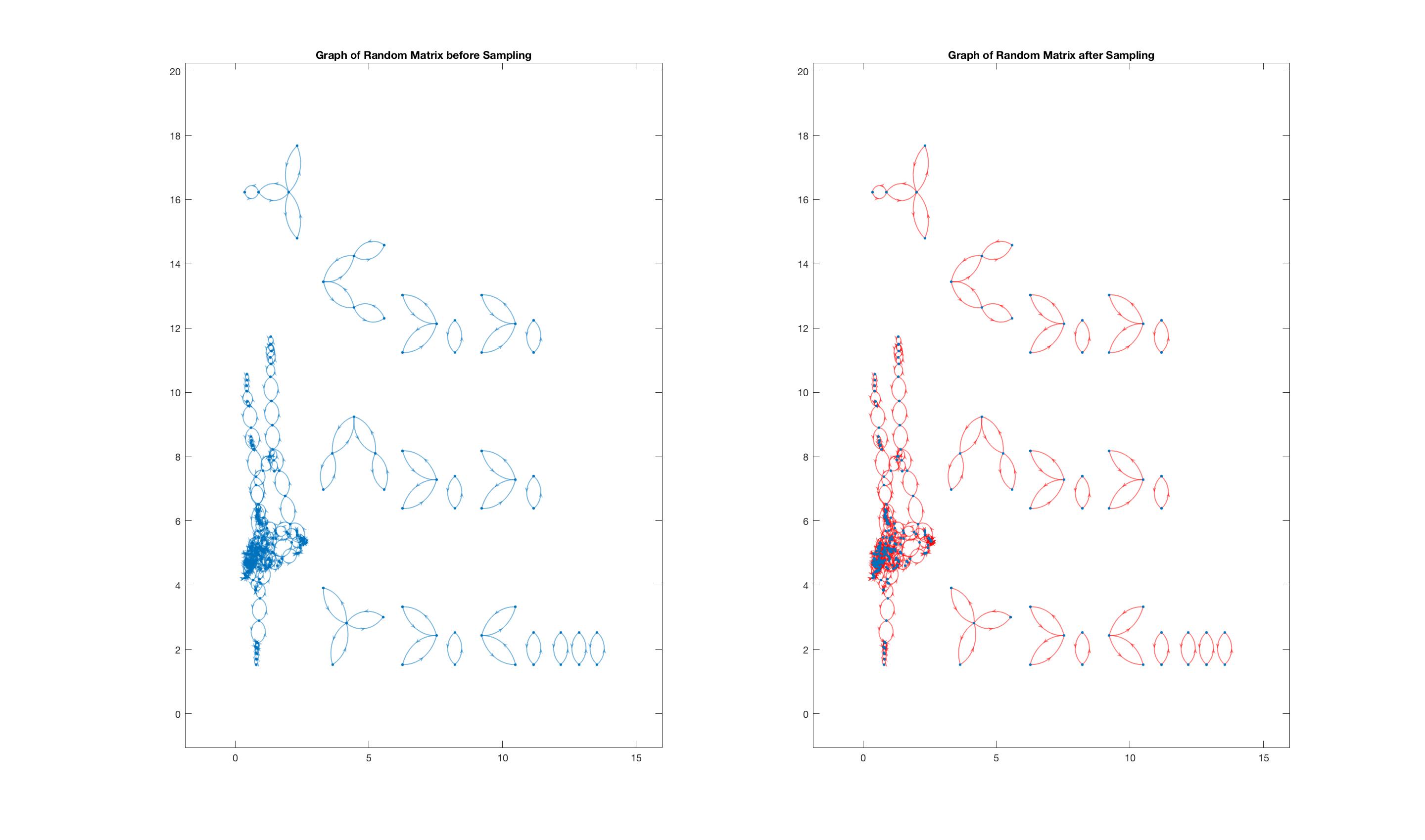}
\caption{Graphs of matrices $B^{\intercal} B$ (left) and $B_s^{\intercal} B_s$ (right). The matrix $B$ is of size $m \times n$, where $m = 68969$ and $n =709$. The sampled matrix $B_s$ is of size $s \times n$ with $s = 18616$ and $n = 709$. The matrix $B$ is rescaled so that the diagonal of $B^{\intercal} B$ is one.}
\label{fig:random_graph}
\end{figure}

\subsubsection{Numerical Results of Graph Laplacian Matrices}
\begin{table}[!htbp]
\centering
\caption{Random Graph Laplacian Matrix: Residual and Iteration Steps}
\begin{tabular}{|*{9}{c|}}\hline
$n$ & $m$ & $s$ & $nnz(B^{\intercal} B)$ & $\kappa(B^{\intercal} B)$    & Residual.CG & Iter.CG & Residual.RS & Iter.RS  \\\hline
    187    &      5078  &   3913    &     10343   & 129.22    & 7.44e-08    &  40     &   5.69e-08  &   17   \\\hline  
    355    &     19475  &   8339   &      39305  &  278.54   & 7.97e-08   &   67   &     9.61e-08 &    25   \\\hline  
    536  &       41032  &  13474  &       82600 &   467.04 &   8.09e-08   &   60   &     8.55e-08  &   18  \\\hline   
    709   &      68969  &  18616 &   138647 &   951.21  &   9.37e-08 &     93  &      9.76e-08 &    25  \\\hline   
    856  &  101829 &   23120 &   204514  &  3145.4&      9.85e-08 &    112   &     6.03e-08 &    28 \\\hline    
\end{tabular}
\label{tb:graphLap}
\end{table}

In Table \ref{tb:graphLap}, we list the iteration steps of CG and PCG with RS preconditioner. As $B^{\intercal} B$ is singular, the condition number in the third column is the so-called effective condition number which is the quotient of the largest eigenvalue over the smallest nonzero eigenvalue of $B^{\intercal} B$. Our least square solver is more robust compared to CG. The iterative steps of our method is less than $1/3$ compared to CG and the elapsed CPU time is comparable to CG, see Table \ref{tb:graphLaptime}. 

\begin{table}[!htbp]
\centering
\caption{Random Graph Laplacian Matrix: Elapsed CPU Time}
\begin{tabular}{|*{9}{c|}}\hline
$n$ & $m$ & $s$ &  Time.CG & Setup.CG & Sum.CG & Time.RS & Setup.RS& Sum.RS \\\hline
   187     &     5078  & 3913  & 1.25e-02 &    3.97e-03 &    1.64e-02 &  1.82e-02 &  1.62e-02 &    3.44e-02 \\\hline
    355    &     19475   & 8339 & 1.10e-02 &   5.73e-03 &    1.67e-02 &  1.17e-02 &  1.74e-02 &     2.91e-02\\\hline
    536   &      41032  &  13474 & 1.55e-02 &  2.78e-03  &  1.82e-02 & 1.09e-02 &  1.31e-02 &       2.40e-02\\\hline
    709  &       68969 & 18616  &4.03e-02 &  3.17e-03 &     4.35e-02 &  2.31e-02 &  1.98e-02 &        4.29e-02 \\\hline
    856  &  101829 & 23120  &  8.68e-02   & 7.98e-03 &     9.48e-02  & 3.90e-02 &   3.69e-02    &     7.57e-02 \\\hline
\end{tabular}
\label{tb:graphLaptime}
\end{table}

\begin{table}[!htbp]
\centering
\caption{Random Graph Laplacian Matrix: Mean and Sample Standard Deviation }
\begin{tabular}{|*{9}{c|}}\hline
$m$ & $n$ & $\kappa(B^{\intercal} B)$ & Iter.Mean &  Iter.Std  & Time.Mean &  Time.Std &   Setup.Mean      &  Setup.Std \\\hline
          5078  &  187  &  1.70e+03    &      16.7    &      1.06  &   3.50e-03 & 7.88e-04   & 2.85e-03  &  3.27e-04 \\\hline
         19475 &   355  &  1.21e+04   &   21.9   &       1.79  &   9.00e-03   &  3.15e-03   & 7.01e-03  &    8.10e-04 \\\hline
         41032  &  536 &   1.85e+04    &   21    &      1.33   &   1.15e-02  &   6.91e-04   & 1.01e-02   &  1.01e-03\\\hline
         68969 &    709   & 5.70e+04   &     26.8    &      2.86   &   2.47e-02   &  4.11e-03   &  2.00e-02   &  1.87e-03 \\\hline
    101829  &  856  &  2.75e+05  &     30.2   &     1.69  &    3.72e-02  &   2.94e-03   &  2.69e-02  &   1.72e-03 \\\hline
\end{tabular}
\end{table} 

%------------------------------------------------
\subsection{Summary of Numerical Results}

It is well known that CG converges fast for well conditioned matrices but fails for ill-conditioned matrices. Although theoretically CG should converge in at most $n$ steps, for ill-conditioned matrices, round off errors are amplified in the orthogonalization procedure which make CG fail to converge within $n$ steps; see Tables~\ref{table:sprand} and~\ref{table:UDV}. With our random sampling preconditioner, PCG converges for all of them. For ill-conditioned matrices, the iterative steps are reduced significantly and elapsed CPU time are reduced to $1/3$ in both the `sprand' group and UDV group. We also apply our least squares solver to solve the random graph Laplacian equation and show it reduces the iteration steps. Although we cannot prove the uniform convergence, the performances listed ahead indicates that random sampling preconditioner is efficient and effective.   

Although the algorithm contains randomness, the standard deviation of iterations and CPU time are acceptable. In most of examples, the ratio of standard deviation to the mean of iterative steps is within $2\%$. Only for the `sprand' cases, the ratio ranges from $2\%$ to $5\%$. 

\section{Conclusion}
In this paper, we construct a randomized row sampling method which aims to solve the least squares problems when matrix $A$ is ill conditioned, sparse, highly overdetermined matrix $m \gg n$. By row sampling with probability proportional to the squared norm of rows, we can get a sampled matrix $A_s$ with size $O(n \log n)\times n$ which can capture the high frequency of the norm matrix. Then the preconditioner is constructed by applying the symmetric Gauss Seidel iteration to the sampled normal matrix $A_s^{\intercal}A_s$. The last but not least is that our preconditioner is very easy to implement compare with the preconditioners based on random transformations. 

\bibliographystyle{abbrv}
%\bibliography{ran_lss}

\end{document}